\numberwithin{equation}{section}
\numberwithin{figure}{section}
\theoremstyle{plain}
\newtheorem{theorem}{Theorem}[section]
\newtheorem{lemma}[theorem]{Lemma}
\newtheorem{proposition}[theorem]{Proposition}
\newtheorem{corollary}[theorem]{Corollary}
\theoremstyle{definition}
\newtheorem{definition}[theorem]{Definition}
\theoremstyle{remark}
\newtheorem{remark}[theorem]{Remark}
\newcommand{\id}{\operatorname{id}}
\newcommand{\End}{\operatorname{End}}
\newcommand{\ad}{\operatorname{ad}}
\DeclareMathOperator{\tr}{tr}
\newcommand{\R}{{\mathbb R}}
\newcommand{\C}{{\mathbb C}}
\DeclareMathOperator{\GL}{GL}
\DeclareMathOperator{\SL}{SL}
\DeclareMathOperator{\PSL}{PSL}
\DeclareMathOperator{\U}{U}
\DeclareMathOperator{\SU}{SU}
\begin{document}

\title[Higgs bundles over cell complexes]{Higgs bundles over cell complexes and representations of finitely presented groups}

\author{Georgios  Daskalopoulos}
\address{Department of Mathematics, Brown University, Providence, RI 02912, USA}
\email{daskal@math.brown.edu}

\author{Chikako Mese}
\address{Department of Mathematics, Johns Hopkins University, Baltimore, MD 21218, USA}
\email{cmese@math.jhu.edu}
  
  \author{Graeme Wilkin}
\address{Department of Mathematics, National University of Singapore, Singapore 119076}
\email{graeme@nus.edu.sg}

\keywords{}
\subjclass[2010]{ Primary: 58E20 ; Secondary: 53C07, 58D27 }

%%%%%%%%%%%%%%%%%%%%%
%  Abstract
%%%%%%%%%%%%%%%%%%%%%

\begin{abstract}
The purpose of this paper is to extend the Donaldson-Corlette theorem to the case of vector bundles over cell complexes. We define the notion of a vector bundle and a Higgs bundle over a complex, and describe the associated Betti, de Rham and Higgs moduli spaces. The main theorem is that the $\SL(r, \C)$ character variety of a finitely presented group $\Gamma$ is homeomorphic to the moduli space of rank $r$ Higgs bundles over an admissible   complex $X$ with $\pi_1(X) = \Gamma$. A key role is played by  the theory of harmonic maps defined on singular domains.
\end{abstract}

\maketitle

%\tableofcontents

\section{Introduction}\label{sec:intro}

Higgs bundles  were first introduced by Hitchin in \cite{hitchin} as a PDE  on a vector bundle over a Riemann surface obtained by the dimensional reduction of the anti-self dual equations on $\R^4$. Since then, the field has seen a remarkable explosion in different directions most notable the work of Simpson \cite{Simpson88} on variations of Hodge structures. The work of Donaldson \cite{Donaldson87} and Corlette \cite{Corlette88} provided links with the theory of flat bundles and  character varieties of groups. Higgs bundles have been generalized over noncompact manifolds \cite{Simpson90}, \cite{JostZuo96}, \cite{JostYangZuo07}, \cite{CorletteSimpson08} and singular projective varieties  \cite{pantev}. The goal of this paper is to push this even further by considering Higgs bundles over more general singular spaces; namely,  finite simplicial complexes. 

As pointed out  by Hitchin, Donaldson and Corlette, a key role in the relation between character varieties and Higgs bundles is played by  the theory of harmonic maps. Harmonic maps had been used before in the study of representations of 
K\"ahler manifold groups starting with  the  work of Siu \cite{siu} and has seen some remarkable applications in providing new proofs of the celebrated Margulis super rigidity theorem (cf. \cite{jost} and all the references therein) and the only existing proof of  the rank one superrigidity theorem due to Corlette and Gromov-Schoen (cf. \cite{Corlette92} and \cite{gromov-schoen}). But these directions involved showing that the representations are rigid, in contrast  with Hitchin's point of view which is to study the moduli space of such representations. 

In all the above references, one studies representations of fundamental groups of smooth manifolds  rather than arbitrary finitely presented groups. In other words, the domain space of the harmonic map is smooth. Chen and  Eells-Fuglede  (cf. \cite{chen}, \cite{eells-fuglede}) developed the theory of harmonic maps from a certain class of singular domains including admissible simplicial complexes. By admissible they mean complexes that are dimensionally homogeneous and locally chainable in order to avoid certain analytic pathologies (see next section for precise definitions). Since any finitely presented group is the fundamental group of an admissible complex, there is no real restriction in considering admissible complexes. The key property of harmonic maps shown in the above references is that they are H\"older continuous but in general they fail to be Lipschitz. In fact, the work of the first two authors shows that Lipschitz harmonic maps often imply that the representations are rigid (cf. \cite{DaskalMese08} and \cite{DaskalMese09}).

The starting point of this paper is a finitely presented group $\Gamma$ and a two-dimensional admissible complex  without boundary $X$ with fundamental group $\pi_1(X)\simeq \Gamma$. We also fix  a piecewise smooth vector bundle $E$ over 
$X$ that admits a flat $\SL(r, \C)$-structure. Such bundles are parametrized topologically by the (finitely many) connected components of the $\SL(r, \C)$-character variety of $\pi_1(X)$. One can write down Hitchin's equations 
\begin{align} 
F_A + \psi \wedge \psi & = 0 \label{eqn:mu1'-intro}  \\
d_A \psi & = 0 \label{eqn:mu2'-intro}
\end{align}
for a sufficiently regular unitary connection  $A$ and Higgs field $\psi$. Again, as in the smooth case, the $\SL(r, \C)$ connection $d_A + \psi $ is flat and one can ask  what the precise condition is so that the pair $(d_A , \psi) $ corresponds to a representation $\rho: \pi_1(X)\rightarrow \SL(r, \C)$.

Given a representation $\rho$ as above, we can associate as in the smooth case a $\rho$-equivariant harmonic map from the universal cover $\tilde X$ to the symmetric space $\SL(r, \C) / \SU(r)$. The first two authors in  \cite{DaskalMese08}  studied harmonic maps from simplicial complexes to smooth manifolds and discovered the following crucial properties:
\begin{enumerate}

\item The harmonic map is smooth away from the codimension 2 skeleton of $\tilde X$.
\item The harmonic map satisfies a balancing condition at the codimension 1 skeleton of $\tilde X$ in the sense that the sum of the normal derivatives vanishes identically.
\item The harmonic map blows up in a controlled way at the codimension 2 skeleton of $\tilde X$.

\end{enumerate}

All the above properties are described precisely in Theorem~\ref{thm:harmonic-map-existence}.  This allows us to prove that  the derivative of the harmonic map belongs in  an appropriate  weighted Sobolev space $L_{1, \delta}^2$ (cf. Proposition~\ref{prop:rightspace}). The definition of weighted  Sobolev spaces is given in Section \ref{subsec:weightedsobolev}.  Finally, the main theorem describing the correspondence between equivalence classes of balanced Higgs pairs of class $L_{1, \delta}^2$ and representations is given  in Section \ref{sec:equivalence} (cf. Theorem~\ref{thm:BettiHiggshomeomorhism}).

\section{Vector bundles over complexes}

\subsection{Basic Definitions of  smooth bundles}\label{subsec:basic-definitions}

\begin{definition}  Let ${\bf E}^N$ be an $N$-dimensional affine space.  A convex  linear polyhedron $\sigma$ of dimension $i$ with interior in some ${\bf E}^i \subset  {\bf E}^N$ is called a cell of dimension $i$.  We will use the notation $\sigma^i$ to denote a cell $\sigma$ of dimension $i$.  A \emph{convex cell complex} or simply a \emph{complex} $X$ in  ${\bf E}^N$ is a finite collection  ${\mathcal F}= \{ \sigma \}$ of  cells  satisfying the following properties:  $(i)$ the boundary $\partial{\sigma^i}$ of $\sigma^i \in {\mathcal F}$ is a union 
of $\tau^j \in {\mathcal F}$ with $j<i$ (called the faces of $\sigma^i$) and $(ii)$ if  $\tau^j, \sigma^i \in {\mathcal F}$ with $j<i$ and ${\sigma^i\cap{\tau^j}}\neq\emptyset$, then  $\tau^j
\subset{\sigma^i}$. \emph{The dimension of a complex} $X$ is the maximum dimension of a cell in $X$.
For a cell $\sigma^i$, we call ${\bf E}^i$ the \emph{extended plane} defined by $\sigma_i$
\end{definition}
For example, a simplicial complex is a cell complex whose cells are all simplices.

\begin{definition}
A connected complex $X$ of dimension $n$ is said to be admissible (cf. \cite{chen} and \cite{eells-fuglede}) if the
following two conditions hold:\\
\\
(i)  $X$ is dimensionally homogeneous, i.e., every cell is
contained in a closure of at least one $n$-cell, and \\
\\
 (ii) $X$ is locally $(n-1)$-chainable, i.e., given any $(n-2)$-cell $v$, every two $n$-cells $\sigma$
and $\sigma'$ incident to  $v$ can be joined by a sequence
$\sigma=\sigma_0,\sigma_1,..., \sigma_k=\sigma'$ where $\sigma_i$ and $\sigma_{i+1}$ are two adjacent $n$-cells incident to $v$ for $i=0,1, \dots, k-1$.
\end{definition}

The boundary $\partial X$ of $X$ is the union of  the closures of the $(n-1)$-cells contained in the closure of exactly one $n$-cell.    
 We will  assume that $X$ is  equipped  with a Riemannian metric $g_{\sigma}$ on the closure $\overline \sigma$ of each $n$-cell $\sigma$ of $X$ satisfying the additional property that  if $\tau$  is a face of $\sigma$, then $g_{\sigma} \big|_{\tau}=g_{\tau}$.   We endow $\GL(n,\C)/\U(n)$ with a Riemannian metric of non-positive sectional curvature such that $\GL(n,\C)$ acts by isometries.

We note that starting in Section~\ref{hmhb}, we will restrict to the case when $X$ is a finite  admissible simplicial complex of dimension 2 without boundary.

 \begin{definition}
Let $U$ be an open subset of $X$.  A function $f: U\rightarrow \R$ is \emph{ smooth}  if for any $n$-cell $\sigma$,   $f$ is smooth on  $\overline{\sigma} \cap U$. More precisely, we say that $f$ is smooth on $U$ if for any cell  $\sigma$, the restriction $f\big|_{\overline{\sigma} \cap U}$ can be extended  to a smooth function in a neighborhood of $\overline{\sigma} \cap U$ in the extended plane defined by $\sigma$.  %A \emph{smooth function} on $X$ is a function  $f: X \rightarrow \R$ is such that its restriction $f\big|_U$ to any open subset  $U$ of $X$ is smooth. We denote the space of all smooth function by $\Omega^0(X)$.
 Let $f:U \rightarrow Y \subset {\bf E}^M$ be a map into a  complex $Y$ and write $f=(f^1,\dots,f^M)$ in terms of an affine coordinate system  on ${\bf E}^M$.  The map $f$ is \emph{ smooth} if $f^j$ is  smooth for every $j=1, \dots, M$.  
\end{definition}

Now we can use these definitions to define the notion of a  smooth vector bundle over a complex.

\begin{definition}
A surjective map $\pi : E \rightarrow X$ is a \emph{ smooth complex vector bundle of rank $r$} over a  complex $X$ if there exists an open cover $\{ U_\alpha \}_{\alpha \in I}$ of $X$ such that
\begin{enumerate}

\item For each $\alpha \in I$, there exists a  homeomorphism $\varphi_\alpha : \pi^{-1}(U_\alpha) \rightarrow U_\alpha \times \C^r$.
\item  For each $x \in X$, $\varphi_\alpha$ defines a linear isomorphism $\pi^{-1}(x) \cong \C^r$ . 

\item If $U_\alpha \cap U_\beta \neq \emptyset$, then the transition function $g_{\alpha \beta} = \varphi_\alpha \circ \varphi_\beta^{-1} : U_\alpha \cap U_\beta \times \C^r \rightarrow U_\alpha \cap U_\beta \times \C^r$ is a  smooth homeomorphism.
\end{enumerate}
A \emph{ smooth section} of $\pi : E \rightarrow X$ is a  smooth map $s : X \rightarrow E$ satisfying $\pi \circ s = \id_X$. Let $\Omega^0(X, E)$ denote the vector space of all  
smooth sections of $\pi : E \rightarrow X$.    If $E$ is a  smooth vector bundle, then so is any associated bundle formed by taking the dual, tensor product, etc. In particular, if $E$ is  smooth then $\End(E)$ is  smooth.  
\end{definition}

\begin{definition}
For a complex $X$ of dimension $n$, the \emph{tangent bundle} $\pi:TX \rightarrow X$ is a vector bundle of rank $n$ such that for any $n$-cell $\sigma$ of $X$,  the map $\pi: \pi^{-1}(\overline{\sigma}) \rightarrow \overline{\sigma}$ is the restriction of the tangent bundle $T(\overline \sigma)$ of $\overline \sigma$ in the extended plane of $\sigma$.  
\end{definition}

\begin{definition}
A \emph{ smooth $p$-form} $\omega=\{\omega_{\sigma}\}$ on $X$ with values in a  smooth vector bundle $E$  is a collection of $p$-forms $\omega_{\sigma}$ for each cell $\sigma \in {\mathcal F}$  with values in $E$, smooth on  $\overline \sigma$  with the additional property that   if $\tau$ is a face of  $\sigma$, then $\omega_{\sigma}|_{\tau}=\omega_{\tau}$.  
 We denote $\Omega^p(X,E)$ as the space of all 
 smooth $p$-forms with values in $E$.  If $E$ is the trivial line bundle, then we write $\Omega^p(X)=\Omega^p(X,E)$ and this is the space of  smooth $p$-forms on $X$.  Given a smooth $p$-form $\omega=\{\omega_{\sigma}\} \in \Omega^p(X)$, we define $d\omega=\{d\omega_{\sigma}\}$ and note that this is a well-defined  smooth $(p+1)$-form.  Clearly, $d^2=0$ and the complex $(\Omega^*(X),d)$ denotes the  smooth deRham complex.  We denote by $H^p_{dR}(X)$ the cohomology groups associated with this complex (cf. \cite[Ch. VIII]{GriffithsMorgan81}).   
\end{definition}

\begin{definition}
A \emph{ smooth connection} on a  smooth vector bundle $\pi : E \rightarrow X$ is a  $\C$-linear map $D : \Omega^0(X, E) \rightarrow \Omega^1(X, E)$ that satisfies the Leibniz rule
$$
D(fs) = (df) s + f (Ds), \quad f \in \Omega^0(X), s \in \Omega^0(X, E).
$$
We denote the space of all  smooth connections by $\mathcal{A}^\C(E)$.
\end{definition}

The definition of $D$ can be extended to bundle-valued forms in the usual way.  More precisely, any element in $\sigma \in \Omega^p(X, E)$ can be written as a linear combination of elements of the form $\sigma =   s\omega$ with $\omega \in \Omega^p(X)$ and $s \in \Omega^0(X, E)$, and define
\begin{equation}
D \sigma =  s (d \omega)  +  (Ds)  \wedge \omega .
\end{equation}

\begin{remark}
Implicit in the definition of $\Omega^1(X, E)$ is that $1$-forms with values in $E$ must agree on the interfaces between the cells in the complex $X$. Therefore, the definition above implies that a connection must map sections that agree on the interfaces to bundle-valued $1$-forms that agree on the interfaces.
\end{remark}

As for the case of a smooth vector bundle over a smooth manifold, with respect to a trivialization $\varphi_\alpha : \pi^{-1}(U_\alpha) \rightarrow U_\alpha \times \C^r$, $D=d+A_{\alpha}$
where ${(A_\alpha})_{ij}$ is a complex-valued  smooth $1$-form. 
%Locally in $U_\alpha$, we write a connection in the form $D = d + A^\alpha$, where $AA^\alpha$ is the matrix of  smooth $1$-forms $A = \left( A_{ij}A^\alpha \right)$. 
$A_\alpha$ is called the \emph{connection form} of $D$ with respect to the trivialization $\varphi_\alpha$. In different trivialization   $\varphi_\beta$ and with  $g_{ \alpha \beta} =  \varphi_\alpha  \circ \varphi_\beta^{-1}$ we have,

\begin{equation}\label{eqn:connection-transition}
A_\beta = g_{\alpha \beta}^{-1} d g_{\alpha \beta} + g_{\alpha \beta}^{-1} A_\alpha g_{\alpha \beta}
\end{equation}

\begin{definition}
The \emph{curvature} of a  smooth connection $D$ is the matrix valued $2$-form $F_D$ defined by
$$
D^2 s = F_D s, \quad \text{for all}\, s \in \Omega^0(X, E).
$$
Locally, we have ${(F_D)}_\alpha = d A_\alpha + A_\alpha \wedge A_\alpha$, where $A_\alpha$ is the connection form of $D$. Furthermore, 
\begin{equation}\label{eqn:curvature-transition}
{(F_D)}_\beta = g_{ \alpha \beta }^{-1} {(F_D)}_\alpha g_ {\alpha \beta},
\end{equation}
and so the curvature form $F_D$ is an element of $\Omega^2(X,\End(E))$.
\end{definition}

\begin{definition} The \emph{complex gauge group} is the group $\mathcal{G}^\C (E) $ of all  smooth automorphisms of $E$. If $D$ is a  smooth connection on $E$ and $g \in \mathcal{G}^\C (E) $, then we define $g(D)= g^{-1} \circ D \circ g$.  In local coordinates, the action of $\mathcal{G}^\C (E) $ on $\mathcal{A}^\C(E)$ is
\begin{equation}\label{eqn:group-action}
g (d + A_\alpha) = d+  g^{-1}dg   + g^{-1} A_\alpha g .
\end{equation}
\end{definition}

\begin{definition}
A \emph{ smooth Hermitian metric} $h=(h_{\sigma})$ on a rank $r$ complex vector bundle $\pi : E \rightarrow X$ is a  Hermitian metric $h_{\sigma}$ on each cell $\sigma$ that is smooth on  $\overline \sigma $ and so that if $\tau$ is a face of $ \sigma$, then $h_{\sigma}|_{\tau}=h_{\tau}$.   A Hermitian metric in a trivialization $\varphi_\alpha : \pi^{-1}(U_\alpha) \rightarrow U_\alpha \times \C^r$  is given locally by a  smooth map $\tilde{h}_\alpha$ from $U_\alpha$ into the positive definite matrices in $\GL(r, \C)$, and the induced inner product on the fibres of $E$ is
$$
<s_1(x), s_2(x)> = \overline{\varphi_\alpha(s_1(x))}^T \, \tilde{h}_\alpha(x) \, \varphi_\alpha(s_2(x)) \in \C.
$$
\end{definition}

\begin{definition}
A connection $D$ on a vector bundle $E$ with a Hermitian metric $h$ is a \emph{unitary connection}  if the following equation is satisfied.
$$
d \left< s_1, s_2 \right> = \left< D s_1, s_2 \right> + \left< s_1, D s_2 \right>,
$$
where $\left< \cdot, \cdot \right>$ is the pointwise inner product on the fibres of $E$ induced by the metric $h$. The space of smooth unitary connections on $E$ is denoted $\mathcal{A}(E,h)$.
If $D \in \mathcal{A}(E,h)$, then the curvature $F_D$ is a section of $\Omega^2(\ad(E))$. In other words, with respect to a unitary frame field the curvature satisfies $F_D^* = -F_D$.
\end{definition}

\begin{definition}
The \emph{unitary gauge group} $\mathcal{G}(E)$ is the subgroup of $\mathcal{G}^\C (E) $ that preserves the Hermitian metric $h$ on each fibre of $E$. The action on $\mathcal{G}(E)$ on $\mathcal{A}^\C(E)$ preserves the space $\mathcal{A}(E,h)$.
\end{definition}

\begin{definition}
A connection $D$ on a vector bundle $E$ is \emph{flat} if $F_D = 0$.  Given a flat connection, we can define the twisted deRham complex $(\Omega^*(X,E),D)$.  The cohomology groups will be denoted by $H^p(X,E)$.  
\end{definition}

\begin{definition}
A \emph{flat structure} on a vector bundle $\pi : E \rightarrow X$ is given by an open cover $\{ U_\alpha \}_{\alpha \in I}$ and trivialisations $\{ \varphi_\alpha \}_{\alpha \in I}$ for which the transition functions $g_{ \alpha \beta } = \varphi_\alpha \circ \varphi_\beta^{-1}$ are constant. A vector bundle with a flat structure is also called a \emph{flat bundle}.
\end{definition}

\begin{remark}
Equation \eqref{eqn:connection-transition} shows that the connection $D=d$ (with zero connection form) is globally defined on a flat bundle. Thus a flat bundle clearly admits a connection of curvature zero.
The converse is also true.
\end{remark}

\begin{theorem}\label{thm:flat-connection-flat-bundle}
Let $X$ be  $n$-complex, $U$ an open subset of $X$ and $E$ a smooth vector bundle with a smooth flat connection on $U$. Then $E$ admits a flat structure.
\end{theorem}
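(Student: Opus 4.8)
The plan is to show that every point of $U$ has a neighborhood carrying a $D$-parallel frame, and then to observe that parallel frames automatically have locally constant transition functions. Concretely, suppose $\{s_i^\alpha\}_{i=1}^r$ and $\{s_j^\beta\}_{j=1}^r$ are frames with $D s_i^\alpha = 0$ and $D s_j^\beta = 0$, defined on $U_\alpha$ and $U_\beta$, and write $s_j^\beta = \sum_i s_i^\alpha (g_{\alpha \beta})_{ij}$ on the overlap, where $(g_{\alpha\beta})_{ij} \in \Omega^0(U_\alpha \cap U_\beta)$. Applying $D$, using the Leibniz rule $D(fs) = (df)s + f(Ds)$ and the two parallel conditions, gives $0 = D s_j^\beta = \sum_i d(g_{\alpha \beta})_{ij}\, s_i^\alpha$, and since the $s_i^\alpha$ are linearly independent we conclude $d g_{\alpha \beta} = 0$; hence $g_{\alpha\beta}$ is constant on each connected component of $U_\alpha \cap U_\beta$. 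The trivializations attached to these frames therefore satisfy the defining condition of a flat structure, so the theorem reduces to the local existence of a parallel frame.

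To construct parallel frames I would work on the open star $U_v$ of a vertex $v$. These form an open cover of $X$ (every point lies in the relative interior of some cell, and that cell has a vertex), and each $U_v$ is contractible, being coned to $v$ by the straight-line homotopy $(x,t) \mapsto (1-t)x + tv$, which is well defined because it agrees on shared faces and, by convexity of the cells, never leaves $\overline{\sigma}$ for the cell $\sigma$ containing $x$. Fix a basis $e_1, \dots, e_r$ of the fibre $E_v$ and define $s_i(x)$, for $x$ in the relative interior of a cell $\sigma$ with vertex $v$, by parallel transporting $e_i$ along the radial segment from $v$ to $x$; this segment lies in $\overline{\sigma}$, inside whose extended plane $\mathbf{E}^{\dim \sigma}$ the connection is an ordinary smooth connection, so the transport ODE $\frac{d}{dt} s + A(\dot\gamma)s = 0$ has a unique smooth solution with smooth dependence on $x$. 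The compatibility condition $A_\sigma|_\tau = A_\tau$ on faces guarantees that the prescription does not depend on which cell a boundary point is regarded as belonging to, so the $s_i$ are well-defined sections, smooth on each $\overline{\sigma} \cap U_v$.

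It remains to upgrade ``parallel along the radial rays'' to $D s_i = 0$ in all directions. Restricted to the interior of a fixed top cell $\sigma$, this is the classical statement that on a star-shaped domain in $\mathbf{E}^n$ a connection with $F_D = dA + A \wedge A = 0$ admits a frame produced by radial parallel transport: flatness is precisely the integrability condition that promotes radial parallelism to full parallelism, and I would supply it by the standard argument of differentiating the transport of $e_i$ across the one-parameter family of radial rays and integrating the curvature, which vanishes. Having established $D s_i = 0$ on the open top cells, continuity across the lower skeleta, once more via $A_\sigma|_\tau = A_\tau$, extends the identity to all of $U_v$.

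The main obstacle is exactly this last point. Because $X$ is singular I cannot invoke the smooth simply-connected statement on $U_v$ wholesale; instead I must prove direction-independence of the transport cell by cell and then verify that the cellwise-parallel frames patch continuously across the interfaces. The compatibility of the connection forms on faces is what forces this patching to succeed, and it is the single place where the cell-complex hypotheses, rather than the geometry of a single smooth domain, genuinely enter the argument.
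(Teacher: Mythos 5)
Your proposal is correct and follows essentially the same route as the paper: both solve for a parallel frame cell by cell (your radial-transport construction is just the explicit form of the Frobenius step the paper invokes), and both patch the cellwise frames using uniqueness of the solution normalized at a common basepoint together with the compatibility $A_\sigma|_\tau = A_\tau$ on faces. The only addition is that you spell out why parallel frames have locally constant transition functions, a step the paper leaves implicit.
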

\begin{proof} Given a flat connection $D$ on $E$, fix an cell $\sigma$, a point $x_0 \in \overline \sigma \cap U$ and consider a contractible neighborhood $V_\sigma$ of $x_0$ in the extended plane of $\sigma$. Choose a local frame $s_\sigma^0$ of $E$ on $V_\sigma$ and let $A^\sigma$ be the corresponding connection form. We are assuming that the local frames $s_\sigma^0$ patch together to define a piecewise smooth frame $s_0$ in a neighborhood of $x_0$ in $X$. We are going to choose a different trivialisation $s_\sigma$ for which the connection can be written as $D = d$. This can de done by solving the equation
\begin{align}\label{eqn:locally-trivial}
\begin{split}
g_\sigma^{-1} A^\sigma g_\sigma + g_\sigma^{-1}dg_\sigma  & = 0 \\
\Leftrightarrow \quad dg_\sigma & = - A^\sigma g_\sigma
\end{split}
\end{align}
locally for a gauge transformation $g_\sigma$. By the result in the smooth case (this is an application of the Frobenius theorem)  a  solution $g_{\sigma}$ exists and by multiplying by a constant matrix we may assume without loss of generality that $g_{\sigma}(x_0)=id$. This makes the solution unique and thus if a cell $\tau$ is a face of a cell $\sigma$ then, since $A^{\sigma} \big|_{\tau}=A^{\tau}$, it must be $g_{\sigma} \big|_{\tau}=g_{\tau}$. It follows that the new frames $s_\sigma=g_\sigma \circ  s_\sigma^0$ patch together to define a piecewise smooth frame $s$ in a neighborhood of $x_0$ in $X$. The flat structure is now defined by the local frames $\{ s\}$.
\end{proof}

\begin{definition}\label{def:holonomy0}
A section $s \in \Omega^0(X, E)$ is \emph{parallel with respect to $D$} if $Ds = 0$. 
Given a  $C^1$ curve $c : [a, b] \rightarrow X$, a section $s$ is \emph{parallel along $c$ with respect to $D$} if $D_{c'(t)} s = 0$.
Given a  $C^1$ curve $c : [a, b] \rightarrow X$ and $s_a \in \pi^{-1}(c(a))$ the \emph{parallel transport of $s$ along $c$ with respect to $D$} is the section $s : \pi^{-1} \left(c([a, b]) \right) \rightarrow E$ which is given locally by the solution to the equation
$$
\frac{d s(c(t))}{dt} + A_{c(t)} (c'(t)) s(c(t)) = 0 .
$$
\end{definition}

\begin{lemma}\label{lem:holonomyindep}
Let $c_1, c_2  : [a, b] \rightarrow X$ be two closed  $C^1$ curves in $X$ which are homotopy equivalent, and which satisfy $x_0 = c_1(a) = c_1(b) = c_2(a) = c_2(b)$. Let $D$ be a smooth flat connection on a rank $r$ bundle $\pi : E \rightarrow X$, and let $s_1$ and $s_2$ be the parallel transport with respect to $D$ along $c_1$ and $c_2$ respectively, with initial condition $s_0 \in \pi^{-1}(x_0)$.  If $F_D = 0$ then $s_1(c_1(b)) = s_2(c_2(b))$.
\end{lemma}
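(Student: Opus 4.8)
The plan is to reduce the statement to the purely topological fact that the monodromy of a locally constant system depends only on the homotopy class of a loop, by first replacing the flat connection with its associated flat structure. First I would apply Theorem~\ref{thm:flat-connection-flat-bundle} to produce a flat structure for $E$ on a neighborhood of the traces of $c_1$ and $c_2$: an open cover $\{U_\alpha\}$ with trivialisations $\{\varphi_\alpha\}$ whose transition functions $g_{\alpha\beta} = \varphi_\alpha\circ\varphi_\beta^{-1}$ are \emph{constant}, and in which $D = d$ with zero connection form. In such a trivialisation the parallel transport equation of Definition~\ref{def:holonomy0} becomes $\frac{d}{dt}\bigl(\varphi_\alpha\circ s\bigr)(c(t)) = 0$, so the local representative $\varphi_\alpha\circ s$ is literally constant along any subarc of $c$ lying in a single chart $U_\alpha$, and changes by the constant matrix $g_{\alpha\beta}$ when the curve passes from $U_\alpha$ to $U_\beta$.

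Next, for a loop $c$, by compactness of $[a,b]$ and the Lebesgue number lemma I would choose a partition $a = t_0 < t_1 < \cdots < t_m = b$ with $c([t_{k-1}, t_k]) \subset U_{\alpha_k}$. By the previous step, the endpoint value $s(c(b))$ is obtained from the initial condition $s_0$ by the corresponding ordered product of the constant transition matrices, i.e.\ by the monodromy in $\GL(r,\C)$ of the local system defined by the flat structure. This product is independent of the chosen subdivision, since refining a partition only inserts trivial factors $g_{\alpha\alpha} = \id$ and is governed by the cocycle identity $g_{\alpha\gamma} = g_{\alpha\beta}g_{\beta\gamma}$ on triple overlaps.

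Finally I would prove homotopy invariance. Let $H : [a,b]\times[0,1]\to X$ be a homotopy with $H(\cdot,0)=c_1$, $H(\cdot,1)=c_2$ and $H(a,u)=H(b,u)=x_0$ for all $u$. Applying the Lebesgue number lemma to the open cover $\{H^{-1}(U_\alpha)\}$ of the compact square yields a grid $a=t_0<\cdots<t_m=b$, $0=u_0<\cdots<u_n=1$ such that each rectangle $[t_{k-1},t_k]\times[u_{l-1},u_l]$ is mapped into a single chart. Passing from one row of the grid to the next alters the defining sequence of charts only by insertions, deletions, or constant transition factors, each of which leaves the ordered product of transition matrices unchanged by the cocycle identity; hence the monodromy along the edge $u=u_{l-1}$ equals that along $u=u_l$. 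Iterating over $l$ shows $c_1$ and $c_2$ have the same monodromy, so $s_1(c_1(b)) = s_2(c_2(b))$.

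The main obstacle is the singular nature of $X$: the homotopy $H$ and the curves are only continuous (piecewise $C^1$) and may repeatedly cross the lower-dimensional skeleton, so the usual smooth argument that $\partial_u$ of parallel transport vanishes because $F_D=0$ is not directly available. The point of passing to the flat structure is precisely to remove all analysis: once the connection form is zero and the transition functions are constant, the computation becomes combinatorial and only continuity and compactness of $H$ are used. The one piece of bookkeeping to verify is that this grid argument respects the transition functions across cell interfaces, which is guaranteed because the frames produced by Theorem~\ref{thm:flat-connection-flat-bundle} satisfy the compatibility $g_\sigma|_\tau = g_\tau$ on faces.
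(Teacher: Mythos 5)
Your proof is correct, but it takes a genuinely different route from the paper's. You first invoke Theorem~\ref{thm:flat-connection-flat-bundle} to replace the flat connection by a flat structure, so that parallel transport becomes the monodromy of a locally constant system, and then you prove homotopy invariance of that monodromy by the standard Lebesgue-number grid argument on the homotopy square, with the cocycle identity doing all the work. The paper instead stays in the language of connections: it reduces to showing that the holonomy of a null-homotopic loop is trivial, observes that a homotopy supported in a single $n$-cell does not change holonomy by the usual smooth-manifold theory (each closed cell sits in its extended plane), and then factors an arbitrary null-homotopy into a finite sequence of such single-cell moves via the edge-group description of $\pi_1$ of a simplicial complex. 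Your argument buys generality and self-containedness: after the reduction it is purely topological, needs only continuity of the homotopy, and does not use the simplicial structure or the edge-group machinery; it also makes explicit how the singular set is handled (the frames of Theorem~\ref{thm:flat-connection-flat-bundle} patch across faces, so the local system is defined on all of $X$). The paper's argument is shorter because it outsources the single-cell case to standard smooth theory, but it leans on the combinatorial decomposition of homotopies. One small point you should make explicit: the overlaps $U_\alpha \cap U_\beta$ of the flat atlas need not be connected, so $g_{\alpha\beta}$ is a priori only locally constant; either refine the cover so overlaps are connected, or record which component of the overlap each grid cell uses when you apply the cocycle identity. This is routine bookkeeping, not a gap.
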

\begin{proof}
As usual note that it suffices to show that the holonomy is trivial around a homotopically trivial loop.
If there is a homotopy equivalence between two loops that is constant except on a single cell, then standard theorems for smooth manifolds show that the holonomy around the two loops is the same. 
Given a homotopically trivial loop $\gamma$, there is a sequence of homotopy equivalences $\gamma \sim \gamma_1$, $\gamma_1 \sim \gamma_2$, $\ldots$, $\gamma_N \sim \id$ between $\gamma$ and the trivial loop (denoted $\id$), such that each homotopy equivalence is constant except  on a single $n$-cell. For example, one can do this by identifying the fundamental group with the edge group of a simplicial complex (cf. \cite{armstrong}, Section 6.4). Therefore, the holonomy of $\gamma$ is the same as the holonomy of each $\gamma_n$ along this sequence of homotopy equivalences, and so  the holonomy of $\gamma$ is trivial.
\end{proof}

\begin{definition}
A flat connection $D$ on a rank $r$ vector bundle $\pi:E \rightarrow X$ defines a representation $\rho : \pi_1(X) \rightarrow \GL(r, \C)$ called the \emph{holonomy representation of $D$}.  A flat connection is called \emph{irreducible} if its holonomy representation is irreducible. The space of irreducible, flat smooth connections is denoted $\mathcal{A}^{\C, irr}(E)$.
\end{definition}

\begin{lemma}\label{lem:representation-flat-connection}
A representation $\rho: \pi_1(X) \rightarrow \GL(r, \C)$ defines a flat connection on a bundle $\pi : E_{\rho} \rightarrow X$ with holonomy representation  $\rho$. Moreover, the flat connection on $E_\rho$ depends continuously on the representation $\rho$.
\end{lemma}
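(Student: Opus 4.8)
The plan is to construct $E_\rho$ via the standard associated-bundle construction adapted to the complex $X$, and then verify that the tautological flat connection it carries has holonomy $\rho$. First I would form the universal cover $\widetilde X$ with its deck action of $\pi_1(X)$, and set $E_\rho = \widetilde X \times_\rho \C^r$, the quotient of $\widetilde X \times \C^r$ by the diagonal action $\gamma \cdot (\tilde x, v) = (\gamma \tilde x, \rho(\gamma) v)$. To see that this is a smooth vector bundle in the sense of the earlier definition, I would cover $X$ by evenly-covered open sets $U_\alpha$ whose preimages in $\widetilde X$ split as disjoint copies indexed by $\pi_1(X)$; choosing one sheet over each $U_\alpha$ gives trivialisations $\varphi_\alpha : \pi^{-1}(U_\alpha) \to U_\alpha \times \C^r$, and on overlaps the transition functions $g_{\alpha\beta} = \varphi_\alpha \circ \varphi_\beta^{-1}$ are the constant matrices $\rho(\gamma_{\alpha\beta})$ for the appropriate deck element $\gamma_{\alpha\beta}$. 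Since these transition functions are locally constant they are in particular smooth in the piecewise sense demanded by the complex structure (they are compatible with restriction to faces because the deck transformations act cellularly), so $E_\rho$ is a genuine smooth vector bundle carrying a flat structure. The flat connection $D$ is then the one with zero connection form in these trivialisations, which is well-defined precisely because the $g_{\alpha\beta}$ are constant, as noted in the remark following the definition of a flat structure.

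Next I would compute the holonomy. Fix a basepoint $x_0$ and a loop representing $\gamma \in \pi_1(X)$; lifting it to a path in $\widetilde X$ from a chosen lift $\tilde x_0$ to $\gamma \tilde x_0$, the parallel transport equation $\frac{ds}{dt} + A_{c(t)}(c'(t)) s = 0$ becomes $\frac{ds}{dt} = 0$ in each local flat trivialisation, so sections that are constant in each sheet are parallel. Tracking the identification across the sheets encoded by $\rho$ as one traverses the loop shows that the parallel transport around $\gamma$ acts on the fibre by $\rho(\gamma)$. Lemma~\ref{lem:holonomyindep} guarantees that this is independent of the choice of representative loop, so the holonomy representation of $D$ is exactly $\rho$.

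For the continuity statement, I would observe that the whole construction is manifestly natural in $\rho$: the transition functions $\rho(\gamma_{\alpha\beta})$ depend continuously (indeed, as the evaluation $\rho \mapsto \rho(\gamma_{\alpha\beta})$ is continuous in $\rho$ with respect to the topology on the representation space) on $\rho$. One way to package this cleanly is to fix the underlying smooth bundle $E$ topologically (the topological type is constant on each connected component of the character variety, as remarked in the introduction) and regard the family $\{E_\rho\}$ as a single bundle whose flat connection varies; since the connection forms can be taken to vanish and only the gluing data $\rho(\gamma_{\alpha\beta})$ moves continuously, the resulting flat connection depends continuously on $\rho$ in $\mathcal{A}^\C(E)$.

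The main obstacle I anticipate is not the topology but the bookkeeping required to check that the construction respects the cellular/piecewise-smooth structure of $X$: one must verify that the chosen trivialisations can be made compatible with the face-restriction conditions built into the definitions of smooth vector bundle and smooth connection (i.e.\ that $g_\sigma|_\tau = g_\tau$ for faces $\tau \subset \sigma$, matching the consistency seen in the proof of Theorem~\ref{thm:flat-connection-flat-bundle}). Because the deck group acts cellularly on $\widetilde X$ and the evenly-covered sets can be chosen subordinate to the cell structure, this compatibility follows, but it is the point where the singular domain genuinely differs from the manifold case and where care is needed.
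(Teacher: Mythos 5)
Your proposal is correct and follows essentially the same route as the paper: both construct $E_\rho = \widetilde X \times_\rho \C^r$ as the associated bundle over the universal cover, take the trivial connection $d$ in the locally constant trivialisations, identify the holonomy with $\rho$ via the deck-transformation gluing, and derive continuity from the continuous dependence of the transition data on $\rho$. Your additional remarks on the cellular compatibility of the trivialisations and the explicit parallel-transport computation are elaborations of points the paper leaves implicit, not a different argument.
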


\begin{proof}
In the standard way, from a representation $\rho : \pi_1(X) \rightarrow \GL(r, \C)$ we construct a flat vector bundle $E_{\rho} \rightarrow X$, with total space
\begin{equation}
E_{\rho} = \tilde{X} \times_\rho \C^r,
\end{equation}
where $\tilde{X}$ is the universal cover of $X$, and the equivalence is by deck transformations on the left factor $\tilde{X}$, and via the representation $\rho$ on the right factor $\C^r$. On each trivialisation we have the trivial connection $d$, and since the transition functions of $E$ are constant, then this connection is globally defined. Since the deck transformations depend continuously on the representation $\rho$ then the flat connection on $E_\rho$ depends continuously on $\rho$.
\end{proof}

\begin{corollary}\label{cor:trivialization-flat-connection}
A flat connection on a vector bundle over a simply connected complex $X$ is complex gauge equivalent to the trivial connection $d$ on the trivial vector bundle.
\end{corollary}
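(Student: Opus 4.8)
The plan is to produce a global parallel frame for $E$ by parallel transport from a fixed basepoint; in such a frame the connection form vanishes identically, which exhibits the desired gauge equivalence with the trivial connection.

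First I would fix a point $x_0 \in X$ and a basis $e_1, \dots, e_r$ of the fibre $\pi^{-1}(x_0) \cong \C^r$. Since $X$ is connected it is path connected, so for each $x \in X$ there is a piecewise $C^1$ path $c$ from $x_0$ to $x$, and I would set $s_i(x)$ to be the parallel transport of $e_i$ along $c$ with respect to $D$, in the sense of Definition~\ref{def:holonomy0}. I claim this is independent of the path chosen. Indeed, if $c_1$ and $c_2$ are two such paths then the concatenation $c_1 * \overline{c_2}$ is a closed loop based at $x_0$; as $X$ is simply connected this loop is homotopically trivial, so by Lemma~\ref{lem:holonomyindep} its holonomy is the identity. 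This says precisely that parallel transport along $c_1$ agrees with that along $c_2$, so each $s_i(x)$ is well defined.

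Next I would verify that each $s_i$ is a smooth section. On the interior of any $n$-cell this is just the standard smooth dependence of solutions of the linear transport ODE of Definition~\ref{def:holonomy0} on their initial data and on the path. The delicate point is compatibility across the skeleton: if $\tau$ is a face of $\sigma$, then the face condition on connection forms gives $A^\sigma\big|_{\tau} = A^\tau$, so parallel transport computed in $\sigma$ restricts on $\tau$ to parallel transport computed in $\tau$. Exactly as in the proof of Theorem~\ref{thm:flat-connection-flat-bundle}, uniqueness of solutions of the transport equation then forces the frames defined on adjacent cells to patch, yielding a genuine smooth frame $\{s_1, \dots, s_r\}$ of $E$ in the sense appropriate to a complex.

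By construction each $s_i$ is parallel, $D s_i = 0$, so in the global trivialisation $\Phi : E \to X \times \C^r$ determined by this frame the connection form is identically zero; that is, $\Phi$ identifies $D$ with the trivial connection $d$ on the trivial bundle. Writing the change of frame between $\Phi$ and any initial trivialisation as an element $g \in \mathcal{G}^\C(E)$, equation~\eqref{eqn:group-action} shows $g(D) = d$, which is the asserted complex gauge equivalence. I expect the main obstacle to be the patching step across the lower-dimensional skeleton: one must ensure that parallel transport defined cell-by-cell assembles into a single well-defined section on all of $X$, including along paths that traverse faces shared by several cells, and this is precisely where the face-compatibility $A^\sigma\big|_{\tau} = A^\tau$ and the uniqueness argument from Theorem~\ref{thm:flat-connection-flat-bundle} do the work.
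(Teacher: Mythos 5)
Your proposal is correct and is essentially the argument the paper intends: the corollary is stated without a separate proof precisely because it follows from Theorem~\ref{thm:flat-connection-flat-bundle} together with Lemma~\ref{lem:holonomyindep}, i.e.\ simple connectivity makes parallel transport path-independent, and the face-compatibility $A^{\sigma}\big|_{\tau}=A^{\tau}$ with ODE uniqueness patches the resulting parallel frame across cells, exactly as you describe. The only cosmetic point is that Lemma~\ref{lem:holonomyindep} is stated for $C^1$ loops while your concatenated paths are only piecewise $C^1$, a standard smoothing issue that does not affect the argument.
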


\begin{definition}
The \emph{$\SL(r, \C)$ character variety} is the space of irreducible representations $\rho : \pi_1(X) \rightarrow \SL(r, \C)$ modulo conjugation by $\SL(r, \C)$
\begin{equation}
\mathcal{M}_{char} = \left\{ \text{irreducible reps }\, \rho : \pi_1(X) \rightarrow \SL(r, \C) \right\} / \SL(r, \C)
\end{equation}
\end{definition}

The next Lemma is a trivial consequence of the path lifting property for principal bundles (cf. for example \cite{goldman-hirsch}, Lemma 3).

\begin{lemma}\label{lemma:components} Two characters defined by the representations $\rho$ and $\rho'$ belong to the same connected component of $\mathcal{M}_{char}$ if and only if the vector bundles $E_\rho$ and $E_{\rho'}$ are  smoothly isomorphic.
\end{lemma}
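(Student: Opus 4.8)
The plan is to establish the two implications separately, in each case reducing the statement to the homotopy invariance of vector bundles, which is the concrete manifestation of the path lifting property for the associated principal frame bundle referred to in the statement. First I would fix notation: write $R^{irr}$ for the set of irreducible representations $\rho : \pi_1(X) \rightarrow \SL(r,\C)$, so that $\mathcal{M}_{char} = R^{irr}/\SL(r,\C)$, and recall from Lemma~\ref{lem:representation-flat-connection} that the flat connection on $E_\rho$ depends continuously on $\rho$. Since $R^{irr}$ (and hence the quotient $\mathcal{M}_{char}$) is locally path connected, its connected components coincide with its path components, so throughout I may replace ``$\rho$ and $\rho'$ lie in the same component'' by ``$\rho$ and $\rho'$ are joined by a continuous path in $\mathcal{M}_{char}$''.

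For the forward implication, suppose $\rho_t$, $t \in [0,1]$, is a path of representations with $\rho_0 = \rho$ and $\rho_1 = \rho'$. By Lemma~\ref{lem:representation-flat-connection} the transition functions of $E_{\rho_t} = \tilde X \times_{\rho_t} \C^r$ vary continuously with $t$, so the bundles $E_{\rho_t}$ fit together into a single vector bundle $\mathcal{E}$ over $X \times [0,1]$ whose restriction to each slice $X \times \{t\}$ is $E_{\rho_t}$. The path lifting property for the principal frame bundle of $\mathcal{E}$ (equivalently, the covering homotopy theorem) shows that the restrictions of $\mathcal{E}$ to $X \times \{0\}$ and $X \times \{1\}$ are isomorphic in the smooth category set up in Section~\ref{subsec:basic-definitions}, and therefore $E_\rho \cong E_{\rho'}$.

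For the converse, assume there is a smooth isomorphism $\Phi : E_\rho \rightarrow E_{\rho'}$. Pulling back the flat structure realizes $D_\rho$ and $\Phi^{*} D_{\rho'}$ as two flat connections on one and the same smooth bundle; equivalently, $E_\rho$ and $E_{\rho'}$ share the same underlying topological frame bundle $P$. The substantive claim is that the topological type of $P$ is a complete invariant of the connected component of $\mathcal{M}_{char}$. Following the path lifting argument of Goldman--Hirsch, the isomorphism $\Phi$ of the underlying principal bundles is covered by a path of flat connections joining $D_\rho$ to $\Phi^{*}D_{\rho'}$; passing to holonomy via Lemma~\ref{lem:holonomyindep} then produces a continuous path in $R^{irr}$ from $\rho$ to a conjugate of $\rho'$, which descends to a path in $\mathcal{M}_{char}$ from $[\rho]$ to $[\rho']$.

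I expect the converse to be the only genuinely nontrivial point. The difficulty is exactly the assertion that the flat connections inducing a fixed topological bundle $P$ form a connected set, so that an isomorphism of the underlying bundles can be promoted to a deformation through \emph{flat} connections rather than merely continuous ones; this is the precise content supplied by the cited path lifting property. A secondary point to check is that the resulting deformation can be arranged to remain within the irreducible locus, so that it genuinely descends to the character variety $\mathcal{M}_{char}$ rather than only to the quotient of all flat connections.
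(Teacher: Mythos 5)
Your forward implication is correct and is exactly what the paper's one-line justification invokes: a path $\rho_t$ in the representation variety assembles the bundles $E_{\rho_t}$ into a single bundle over $X\times[0,1]$, and the covering homotopy (path lifting) property for the associated principal bundle gives $E_{\rho_0}\cong E_{\rho_1}$. Together with the observation that components of an algebraic set coincide with path components, this half matches the paper's intended argument via Goldman--Hirsch, Lemma 3.

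The converse is where there is a genuine gap, and you have in fact isolated precisely the missing step before waving it away: you need the set of irreducible flat connections with a fixed underlying smooth bundle to be connected, and you assert that this is ``the precise content supplied by the cited path lifting property.'' It is not. Path lifting for the universal bundle says that homotopic classifying maps yield isomorphic bundles --- that is the forward direction again; it gives no mechanism for promoting a smooth bundle isomorphism $\Phi:E_\rho\to E_{\rho'}$ to a path through \emph{flat} connections, and the connectedness claim is false in general. Concretely: over a $2$-complex a rank-$r$ complex vector bundle is classified by $c_1\in H^2(X;\Z)$, and $\det E_\rho$ is the flat line bundle associated to $\det\rho=1$, so $c_1(E_\rho)=0$ and every $E_\rho$ is smoothly trivial. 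The ``only if'' half of the lemma would therefore force $\mathcal{M}_{char}$ to be connected, which already fails when $\pi_1(X)$ is a finite group with two inequivalent irreducible representations of the same rank (e.g.\ the two $3$-dimensional irreducibles of $A_5$): these give distinct isolated points of $\mathcal{M}_{char}$ with smoothly isomorphic associated bundles. So your converse argument cannot be repaired as written; only the ``if'' half is a consequence of path lifting, and that is also the only half needed to make $E_c$ well defined on each component, which is how the lemma is used in the sequel.
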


In view of the above, let $\mathcal{C}$ denote the set of connected components of $ \mathcal{M}_{char}$. Then we can write 
\[
 \mathcal{M}_{char}= \bigsqcup_{c \in \mathcal{C}} \mathcal{M}_{char}^c
\]
and write $E_c=E_\rho$ for any representative in the isomorphism class of bundles defined by  $\rho \in  \mathcal{M}_{char}^c$.

\begin{remark} Since we are interested in the $\SL(r, \C)$ character variety instead of the $\GL(r, \C)$ character variety we need to fix determinants in our definitions of connections and gauge transformations. Henceforth \emph{ we will impose the condition that all connection forms are traceless and all gauge transformations have determinant one.} For the sake of notational simplicity  we will keep the same notation as before for the various spaces of  $\SL(r, \C)$ connections and gauge groups.
\end{remark}

\begin{proposition}\label{prop:holonomyequivalence}
$$\mathcal{A}_{flat}^{\C, irr} (E_c) / \mathcal{G}^\C (E_c)  \cong \mathcal{M}_{char}^c$$
\end{proposition}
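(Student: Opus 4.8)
The plan is to realize the isomorphism as the holonomy correspondence and then check it is a homeomorphism. First I would define
$$
\Phi : \mathcal{A}_{flat}^{\C, irr}(E_c) \longrightarrow \mathcal{M}_{char}^c, \qquad \Phi(D) = [\rho_D],
$$
sending a flat connection to the conjugacy class of its holonomy representation. Lemma~\ref{lem:holonomyindep} shows that parallel transport around a loop depends only on its homotopy class, so $\rho_D$ is a well-defined representation of $\pi_1(X)$; because we have imposed that all connection forms are traceless, the transport equation in Definition~\ref{def:holonomy0} satisfies $\frac{d}{dt}\det = \tr(-A)\det = 0$, so $\rho_D$ takes values in $\SL(r, \C)$. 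Changing the basepoint and the chosen frame over it alters $\rho_D$ only by conjugation, so $\Phi$ genuinely lands in the character variety, and Lemma~\ref{lemma:components} places it in the component $\mathcal{M}_{char}^c$ labelled by the smooth isomorphism class of $E_c$. Finally, for $g \in \mathcal{G}^\C(E_c)$ the holonomy of $g(D)$ is conjugate, via the value of $g$ at the basepoint, to that of $D$, so $\Phi$ is constant on gauge orbits and descends to a map $\bar\Phi$ on the quotient.

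Surjectivity is immediate from the earlier results. Given $[\rho] \in \mathcal{M}_{char}^c$, Lemma~\ref{lem:representation-flat-connection} produces a flat connection on $E_\rho$ with holonomy $\rho$, and by Lemma~\ref{lemma:components} the bundle $E_\rho$ is smoothly isomorphic to $E_c$; transporting the connection along such an isomorphism yields a flat connection on $E_c$ whose image under $\bar\Phi$ is $[\rho]$.

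The heart of the argument is injectivity. Suppose $D_1, D_2$ are irreducible flat connections on $E_c$ whose holonomies satisfy $\rho_2 = M \rho_1 M^{-1}$ for some $M \in \SL(r,\C)$, computed with respect to a fixed frame at a basepoint $x_0$. I would construct the gauge transformation directly by parallel transport: for $x \in X$ and a path $\gamma$ from $x_0$ to $x$, set
$$
g(x) = P^{D_2}_\gamma \circ M \circ \left( P^{D_1}_\gamma \right)^{-1},
$$
where $P^{D_i}_\gamma$ is $D_i$-parallel transport along $\gamma$. Comparing two paths $\gamma, \gamma'$, the discrepancy $g_{\gamma'}(x)^{-1} g_\gamma(x)$ collapses to $P^{D_1}_{\gamma'}\bigl(M^{-1}\rho_2(a)M\bigr)(P^{D_1}_\gamma)^{-1}$ with $a = [\gamma\,\overline{\gamma'}] \in \pi_1(X,x_0)$, which equals the identity precisely because of the conjugation hypothesis; here Lemma~\ref{lem:holonomyindep} is what lets me replace a difference of transports by the holonomy of the loop $a$. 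Since each $P^{D_i}_\gamma$ has determinant one and $M \in \SL(r,\C)$, the map $g$ is a determinant-one automorphism, and by construction it carries $D_1$-parallel sections to $D_2$-parallel sections, so that $D_2 = g\,D_1\,g^{-1}$ and $D_1, D_2$ lie in a single $\mathcal{G}^\C(E_c)$-orbit. The step I expect to demand the most care is verifying that $g$ is a genuine \emph{piecewise smooth} section over the complex $X$ — that the transport formula respects the cell structure and agrees on faces — rather than merely a fibrewise-defined isomorphism; on the singular domain this must be read off from the compatibility of parallel transport across cells built into Definition~\ref{def:holonomy0} and Lemma~\ref{lem:holonomyindep}, in place of the usual smooth-manifold theory.

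It remains to upgrade the bijection $\bar\Phi$ to a homeomorphism, which I would do by checking continuity in both directions: $\bar\Phi$ is continuous because parallel transport, hence holonomy, depends continuously on the connection, while its inverse is continuous by the continuous dependence of the flat connection on $\rho$ recorded in Lemma~\ref{lem:representation-flat-connection}. Irreducibility enters here to ensure, via Schur's lemma, that both the gauge stabilizer and the conjugation stabilizer reduce to the centre, so the two orbit spaces are Hausdorff and $\bar\Phi$ is a homeomorphism rather than merely a continuous bijection.
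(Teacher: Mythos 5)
Your proposal is correct and sets up the same correspondence as the paper: holonomy in one direction, the associated flat bundle $E_\rho = \tilde X \times_\rho \C^r$ (via Lemma~\ref{lem:representation-flat-connection} and Lemma~\ref{lemma:components}) in the other. The one place you genuinely diverge is injectivity. The paper disposes of it in a single sentence by appealing to Theorem~\ref{thm:flat-connection-flat-bundle}: each $D_i$ determines a flat structure with constant transition functions, and conjugate holonomies force those flat structures, hence the connections, to be complex gauge-equivalent. You instead build the intertwining gauge transformation explicitly as $g(x) = P^{D_2}_\gamma \circ M \circ (P^{D_1}_\gamma)^{-1}$ and verify path-independence from the conjugation hypothesis via Lemma~\ref{lem:holonomyindep}. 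The two are really the same mechanism (parallel transport is exactly what produces the locally constant frames in Theorem~\ref{thm:flat-connection-flat-bundle}), but your version makes visible the point the paper leaves implicit — that $g$ must be a piecewise smooth, determinant-one automorphism compatible across faces of the complex — which is worth spelling out on a singular domain, and it is self-contained where the paper's sentence asks the reader to supply the argument. The paper's route is shorter and reuses an already-proved structural result; yours costs a well-definedness computation but documents the regularity of $g$. Your closing remarks on continuity match the paper's use of Lemma~\ref{lem:representation-flat-connection}; the Schur's-lemma/Hausdorffness comment is extra and not needed for the statement as the paper proves it.
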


\begin{proof}
The holonomy map applied to an irreducible flat connection $D$ gives an irreducible representation $\rho : \pi_1(X, x_0) \rightarrow \GL(r, \C)$. The action of a complex gauge transformation $g \in \mathcal{G}^\C (E_c) $ on $D$ induces the conjugate action of an element $\xi = g(x_0) \in \GL(r, \C)$ on $\rho$. Therefore we have a continuous map $\tau : \mathcal{A}_{flat}^{\C, irr} (E_c)/ \mathcal{G}^\C (E_c)  \rightarrow \mathcal{M}_{char}^c$. Note that $\tau([D_1]) = \tau([D_2])$ implies that the flat structures associated to $D_1$ and $D_2$ by Theorem \ref{thm:flat-connection-flat-bundle} are complex gauge-equivalent, and so $D_1$ and $D_2$ are complex gauge-equivalent. Therefore $\tau$ is injective.

Similarly, given a representation $\rho : \pi_1(X, x_0) \rightarrow \GL(r, \C)$ we construct a flat connection $d$ on the flat bundle $E_\rho$ as in the proof of Lemma \ref{lem:representation-flat-connection}. If we conjugate the representation by an element $\xi \in \GL(r, \C)$, then the flat connection associated to this new representation is related to $E_\rho$ by a global change of co-ordinates using the action of $\xi$ on the fibres of $E_\rho$. Therefore the two flat bundles are complex gauge-equivalent, and so conjugate representations give $\mathcal{G}^\C (E_c) $-equivalent flat connections, which gives us a continuous map $\zeta : \mathcal{M}_{char} \rightarrow \mathcal{A}_{flat}^{\C, irr}(E_c) / \mathcal{G}^\C (E_c) $.
Lemma \ref{lem:representation-flat-connection} shows that $\tau \circ \zeta = \id$. Since $\tau$ is injective then this implies that $\zeta \circ \tau = \id$ and so $\tau$ is a homeomorphism $\mathcal{A}_{flat}^{\C, irr} (E_c) / \mathcal{G}^\C (E_c)  \cong \mathcal{M}_{char}^c$.
\end{proof}

\subsection{Relationship to Higgs bundles}\label{subsec:relate-higgs}

Given a complex $X$ with universal cover $\tilde{X}$, fix an irreducible representation $\rho : \pi_1(X) \rightarrow \SL(r, \C)$, and let $E = \tilde{X} \times_\rho \C^r \rightarrow X$ be as before. We also fix  a $\rho$-equivariant map $u : \tilde{X} \rightarrow \SL(r, \C) / \SU(r)$, locally Lipschitz away from the 0-skeleton $X^0$ of $X$. We now recall the basic construction from \cite{Corlette88} and  \cite{Donaldson87}.

\begin{enumerate}

\item \label{item:positive} The complexified tangent space $T^{\C}_h(\SL(r, \C) / \SU(r))$ can be identified (independent of $h$) with the space of traceless matrices  and this gives a trivialization of the complexified tangent bundle $T^{\C}(\SL(r, \C) / \SU(r)) \cong \SL(r, \C) / \SU(r) \times \mathfrak {sl} (r, \C)$.  

\item \label{item:L-C-connection} In the trivialization given in (\ref{item:positive}) the Levi-Civita connection at a point $h \in \SL(r, \C) / \SU(r)$ has the form
\begin{equation*}
\nabla_X Y = dY(X)- \frac{1}{2} \left( dh(X)h^{-1}Y+ Yh^{-1}dh(X) \right) ,
\end{equation*}
where we use the notation $h$ to indicate  left translation by $h$.

\item \label{item:flat-connection} 
The identification $  h^{-1} \big(T^{\C}_h (\SL(r, \C) / \SU(r) ) \big)  \cong T^{\C}_{\id} (\SL(r, \C) / \SU(r) )\cong  \mathfrak {sl} (r,\C)$ gives another isomorphism $ \theta:  T^{\C} (\SL(r, \C) / \SU(r))  \rightarrow \SL(r, \C) / \SU(r) \times  \mathfrak {sl} (r,\C)$.  It follows immediately from (\ref{item:L-C-connection}) that in the coordinates given by $\theta$, the  Levi-Civita connection is given by 
\begin{eqnarray*}
\nabla_X s &=&  h^{-1}\nabla_X (hs) \\
&=& ds(X) + \frac{1}{2} \left[ h^{-1} dh(X), s \right].
\end{eqnarray*}
We thus conclude that in the above coordinates 
\begin{equation}\label{eqn:fund-eqn0}
d=\nabla +\frac{1}{2} \left[  h^{-1}dh, . \right].
\end{equation}
\item \label{item:action}The  isomorphism $\theta$ is equivariant with respect to the $\PSL(r, \C)$ action on the complexified tangent bundle $T^{\C}(\SL(r, \C) / \SU(r))$ and the adjoint representation on $T^{\C}_{id}(\SL(r, \C) / \SU(r)) \cong \mathfrak sl(r, \C)$. 

\item  \label{item:the equations} Given $u$ as above, consider the pull-backs $\mathcal D = u^*d$ and $ d_A = u^*\nabla $ on the trivial bundle $\tilde X \times T^{\C}_{id}(\SL(r, \C) / \SU(r))\cong  \tilde X \times \mathfrak sl(r, \C)$. First notice, that since $u^*d$ is trivial and $u$ is $\rho$-equivariant $\mathcal D $ descends to a flat connection of holonomy $\rho$ on $E_\rho$. Again, by the  $\rho$-equivariance of $u$ and (\ref{item:action}) the connection  $ d_A$  descends to a connection on  $ad(E_\rho)$  over $X$. Moreover, since its connection form acts by the adjoint representation it defines an $\SL(r,\C)$ connection on the bundle $E_\rho$ over $X$ and (\ref{eqn:fund-eqn0}) implies
\begin{equation} \label{eqn:fund-eqn}
\mathcal D=d_A +\psi,  \  \ \psi= \frac{1}{2}  u^{-1}du.
\end{equation}
Since $\mathcal D$ is a flat connection, then
\begin{align} 
F_A + \psi \wedge \psi & = 0 \label{eqn:mu1'} \\
d_A \psi & = 0 \label{eqn:mu2'}.
\end{align}
\end{enumerate}

\subsection{The balancing condition}\label{subsec:balancing}

\begin{definition}\label{def:balancing}
A  smooth $1$-form $\omega=\{\omega_{\sigma}\} \in \Omega^1(X)$ satisfies the \emph{balancing condition} if for every $(n-1)$-cell $\tau$, we have
\begin{equation}\label{eqn:balancing}
\sum_{\sigma>\tau} \omega_{\sigma}(e^n_{\sigma}) = 0
\end{equation}
where $\sigma>\tau$ implies that $\tau$ is a face of $\sigma$, and $e^n_{\sigma}$ is an inward pointing normal vector field along $\tau$ in $\sigma$.   The set $\Omega^1_{bal}(X)$ is elements of $\Omega^1(X)$ satisfying the balancing condition.  
\end{definition}

\begin{definition}
Let $E$ be  smooth vector bundle on $X$ of rank $r$ and let $p: \tilde X \rightarrow X $ be the universal cover. We assume that the pullback bundle $p^*(E)$ over 
$\tilde X$ is trivial \emph{with a fixed trivialization} $p^*(E) \cong \tilde X \times \C^r$ (if the connection is flat then this is always valid by Corollary \ref{cor:trivialization-flat-connection}).  A connection $D \in \mathcal{A}^{\C, irr}(E)$  is called \emph{balanced} if its pullback $p^*(D)$ to $p^*(E)$ can be written (in the given trivialization) as
$p^*(D)=d+A$ where all the components$A_{ij} \in \Omega^1_{bal}(\tilde X)$.
Let $\mathcal{A}_{bal}^{\C, irr}(E)$ be the space of irreducible, smooth, balanced $\GL(r, \C)$ connections, and let $\mathcal{A}_{bal}^{irr}(E)$ denote the space of irreducible, smooth, balanced connections compatible with the Hermitian metric $h$ on $E$. In what follows, if the meaning is clear then the notation for the metric is suppressed. 
\end{definition}

\begin{definition}
Let $E$ be as in the previous Definition. Given $g \in \mathcal{G}^\C (E) $, let $\tilde g$ denote the induced gauge transformation of $p^*(E)$. We define $\mathcal{G}^\C_{bal}(E)$ (resp. $\mathcal{G}_{bal}(E)$)  to be the group of complex (resp. unitary) gauge transformations such that $g \in \mathcal{G}^\C_{bal}(E)$ (resp. $g \in \mathcal{G}_{bal}(E)$) implies that $d \tilde g_{ij} \in \Omega^1_{bal}(\tilde X)$.
\end{definition}

\begin{remark}
Note that via \eqref{eqn:group-action}, the group $\mathcal{G}_{bal}^\C(E)$ acts on the space $\mathcal{A}_{bal}^{\C, irr}(E)$, and $\mathcal{G}_{bal}(E)$ acts on $\mathcal{A}_{bal}^{irr}(E)$.
\end{remark}

\begin{remark}\label{rem:makessense}
In this paper we are interested in flat bundles. Note that Corollary~\ref{cor:trivialization-flat-connection} implies that the pullback of a flat bundle to the universal cover is trivial. By choosing a trivialization it thus makes sense to talk about balanced connections and gauge transformations.
\end{remark}

\section{Harmonic maps and Higgs bundles} \label{hmhb}
In this section we describe the relationship between Higgs bundles and harmonic maps from a  complex $X$ into the space $\SL(n, \C) / \SU(n)$, a generalisation  of the construction of \cite{Donaldson87} and \cite{Corlette88}. 
\emph{From now on $X$ will denote an admissible 2-dimensional simplicial complex without boundary.}   We will further assume that $X$ is  equipped  with a Riemannian metric $g_{\sigma}$ on each simplex $\sigma$ of $X$ with the following conditions:  (i) for any 2-cell $\sigma$, $(\sigma, g_{\sigma})$ is isometric to an interior of an equilateral triangle in ${\bf R}^2$ and for any 1-cell $\tau$, $(\tau,g_{\tau})$ is isometric with the open unit interval in ${\bf R}$, and (ii)  given a 2-cell $\sigma$ and a 1-cell $\tau$ that is a face of $\sigma$, we have $g_{\sigma} \big|_{\tau}=g_{\tau}$.  It is not hard to extend the results of this section to general Riemannian metrics and also general 2-dimensional cell complexes. We endow $\SL(n,\C)/\SU(n)$ with a Riemannian metric of non-positive sectional curvature such that $\SL(n,\C)$ acts by isometries.

\subsection{Estimates of harmonic maps}

\begin{theorem}
Let $X$ be a 2-complex as before with universal cover $\tilde{X}$ and $\rho:\pi_1(X) \rightarrow \SL(n,\C)$ be an irreducible representation.  Then there exists a unique $\rho$-equivariant harmonic map $u:\tilde{X} \rightarrow Y:=\SL(n,\C)/\SU(n)$.
\end{theorem}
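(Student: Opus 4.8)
The target $Y=\SL(n,\C)/\SU(n)$ is a Hadamard manifold (complete, simply connected, of nonpositive sectional curvature by our standing hypothesis), hence a $\mathrm{CAT}(0)$ space on which $\SL(n,\C)$ acts by isometries. The plan is to produce $u$ by the direct method, working in the finite-energy framework for maps from admissible complexes into NPC targets (Korevaar--Schoen, Eells--Fuglede, Chen; cf. \cite{eells-fuglede}, \cite{chen}). Since $X$ is a finite complex, a fundamental domain $\Omega\subset\tilde X$ for the $\pi_1(X)$-action is relatively compact, and for a $\rho$-equivariant map the energy density descends to $X$; so I define $\mathcal{E}(u)=\int_\Omega|\nabla u|^2$ and seek to minimize $\mathcal{E}$ over the space of $\rho$-equivariant finite-energy maps. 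This space is nonempty: because $Y$ is contractible one can build a smooth equivariant map by trivializing over $\Omega$ and extending equivariantly.

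First I would take a minimizing sequence $u_k$. The lower semicontinuity of $\mathcal{E}$ and the Rellich-type compactness of the Korevaar--Schoen theory produce a finite-energy limit provided the images stay bounded, and the interior estimates and structure theory of \cite{eells-fuglede}, \cite{chen}, \cite{DaskalMese08} then upgrade such a minimizer to a genuine harmonic map with the regularity recorded in Theorem~\ref{thm:harmonic-map-existence}. The essential difficulty is the noncompactness of $Y$: the equivariance leaves no freedom to normalize by a single isometry, so a priori the sequence $u_k$ could drift off toward the ideal boundary $\partial_\infty Y$.

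The crux of the existence argument is therefore a properness (coercivity) statement for $\mathcal{E}$, and this is exactly where irreducibility enters. I would argue that escape to infinity of a minimizing sequence would, after a renormalization or ultralimit argument, yield a point $\xi\in\partial_\infty Y$ fixed by the whole image $\rho(\pi_1(X))$. For the symmetric space $Y$ the stabilizer of such a $\xi$ is a proper parabolic subgroup of $\SL(n,\C)$, so a $\rho$-invariant point at infinity would force $\rho$ to preserve a proper flag, hence a proper subspace of $\C^n$, contradicting irreducibility. Equivalently, irreducibility guarantees that $\rho$ lies in no proper parabolic, i.e.\ is reductive, which is precisely the standard criterion ensuring properness of the energy. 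I expect making this no-escape-to-infinity step rigorous in the singular-domain setting to be the main obstacle.

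For uniqueness I would exploit the convexity of the distance function on the $\mathrm{CAT}(0)$ target. Given two $\rho$-equivariant harmonic maps $u_0,u_1$, the function $x\mapsto d(u_0(x),u_1(x))$ is invariant under the $\pi_1(X)$-action (by equivariance of the $u_i$ together with the fact that $\rho$ acts by isometries), hence descends to the compact complex $X$; NPC convexity makes it subharmonic away from the singular skeleton, and the balancing and maximum-principle machinery of \cite{DaskalMese08} forces it to be constant. A nonzero constant would produce a flat strip in $Y$ invariant under $\rho(\pi_1(X))$, again contradicting irreducibility, so $d(u_0,u_1)\equiv 0$ and $u_0=u_1$. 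Alternatively one may run the geodesic-homotopy argument: $t\mapsto\mathcal{E}(u_t)$ is convex with both endpoints minimizing, hence constant, and the second-variation identity then forces the homotopy to be trivial.
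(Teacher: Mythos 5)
Your outline reproduces, in essence, the argument behind the results the paper invokes: the paper's entire proof is to cite Theorem 4.5 of \cite{DaskalMese06} for existence and \cite{Mese02} for uniqueness, and those references carry out exactly the direct-method minimization with coercivity coming from irreducibility (no fixed point in $\partial_\infty Y$, since proper parabolics stabilize flags) and the CAT(0)-convexity/geodesic-homotopy uniqueness argument you describe. So your approach is the same as the paper's; the ``no escape to infinity'' step you flag as the main obstacle is precisely the technical content the paper delegates to \cite{DaskalMese06}.
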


\begin{proof}
The existence is a special case of Theorem 4.5 of \cite{DaskalMese06}.  Uniqueness follows from \cite{Mese02}.
\end{proof}

Let $p$ be a vertex (i.e. 0-cell)  of  $X$.   Define ${\mathcal S}_1(p)$ be the set of 1-cells  of $X$ containing  $p$ in its closure.  Let $\tau$ be a 1-cell of $X$.  Define ${\mathcal S}_2(\tau)$ be the set of 2-cells of $X$ containing $\tau$ in its closure.  For $\tau \in {\mathcal S}_1(p)$ and $\sigma \in {\mathcal S}_2(\tau)$, we define polar coordinates $(r,\theta)$ of $\sigma \cup \tau$ centered at $p$ by setting $r$ to be the distance  from $p$ to a point $p_0 \in \sigma \cup \tau$ and $\theta$ to be the angle between $\tau$ and the line $\overline{pp_0}$ connecting $p$ and $p_0$.

The next theorem is one of the main technical results of the paper and describes the singular behavior of harmonic maps near the lower dimensional strata.  

\begin{theorem} \label{thm:harmonic-map-existence}
If $u:X \rightarrow Y$ is a harmonic map, then $u \in C^{\infty}(\sigma \cup \tau)$ for any one-cell $\tau$ and two-cell $\sigma$ so that $\sigma \in {\mathcal S}_2(\tau)$.  (In other words, the restriction of $u$ to $\sigma$ is $C^{\infty}$ up to $\tau$).  Moreover, if $(r,\theta)$ are the polar coordinates of $\sigma \cup \tau$ centered at a 0-cell $p$  and $u$ is given in local coordinates $(u^1, \dots, u^M)$ in a neighbourhood of  $u(p)$, we have the following derivative bounds for $u^m$ in a neighbourhood of $p$:

\[
\left| \frac{\partial u^m}{\partial r}  \right| \leq Cr^{\alpha-1}, \ \ \left| \frac{\partial u^m}{\partial \theta}  \right|  \leq Cr^{\alpha}
\]
\[
\left| \frac{\partial^2 u^m}{\partial r^2}  \right| \leq Cr^{\alpha-2}, \ \ \left| \frac{\partial^2 u^m}{\partial r \partial \theta}  \right| \leq Cr^{\alpha-1}, \ \ \left| \frac{\partial^2 u^m}{\partial \theta^2}  \right| \leq Cr^{\alpha}
\]
\[
\left| \frac{\partial^3 u^m}{\partial r^3}  \right| \leq Cr^{\alpha-3}, \ \ \left| \frac{\partial^3 u^m}{\partial^2 r \partial \theta}  \right| \leq Cr^{\alpha-2}, \ \ \left| \frac{\partial^3 u^m}{\partial r \partial^2 \theta}  \right| \leq Cr^{\alpha-1}, \ \ \left| \frac{\partial^3 u^m}{\partial \theta^3}  \right| \leq Cr^{\alpha}
\]
for some constants $C>0$ and $\alpha>0$ depending on the total energy of $u$ and the geometry of the complex $X$. Furthermore,  $\alpha$ can be chosen independently of the choice of the 0-cell $p$ of $X$.  
\end{theorem}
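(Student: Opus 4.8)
The plan is to analyze the two singular strata separately: the codimension-one stratum (the open $1$-cells, away from vertices), where I expect $u$ to be smooth up to the edge, and the codimension-two stratum (the $0$-cells), where the derivatives are controlled only up to a blow-up governed by $\alpha$. Everything rests on the target $Y=\SL(n,\C)/\SU(n)$ being an NPC manifold, so that the order (Almgren frequency) function technology of \cite{gromov-schoen}, adapted to simplicial complexes in \cite{DaskalMese08}, is available. In effect this theorem repackages the regularity results of \cite{DaskalMese08}, so I would cite those for the qualitative input (continuity, the order function, the homogeneous blow-up) and concentrate on deriving the quantitative estimates by a rescaling argument.

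\emph{Smoothness up to an edge.} Fix an interior point $q$ of a $1$-cell $\tau$, away from the $0$-skeleton. A neighborhood of $q$ is a ``book'': finitely many half-disks $D_1,\dots,D_k$, one for each $\sigma\in\mathcal S_2(\tau)$, glued isometrically along the common diameter lying in $\tau$. On each page $u|_{D_i}$ solves the harmonic map system $\Delta u^m+\Gamma^m_{jl}(u)\,\nabla u^j\cdot\nabla u^l=0$, the pages agree along the spine, and the first variation of energy forces the balancing condition of Definition~\ref{def:balancing}, namely the Neumann-type jump relation $\sum_i\partial_n u|_{D_i}=0$ with $\partial_n$ the inward normal derivative. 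This is an elliptic transmission problem with smooth coefficients, and I would run the standard Schauder bootstrap up to the interface to conclude $u|_\sigma\in C^\infty$ up to $\tau$, which is exactly the assertion $u\in C^\infty(\sigma\cup\tau)$.

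\emph{Order, energy decay, and the derivative bounds at a vertex.} Fix a $0$-cell $p$. Following \cite{gromov-schoen,DaskalMese08}, I would introduce the order function $\mathrm{Ord}_u(p,r)$ (scaled energy over scaled boundary distance); its monotonicity, a consequence of the NPC target together with the cone structure of the domain near $p$, yields a limit $\alpha_p:=\lim_{r\to0}\mathrm{Ord}_u(p,r)>0$, the positivity coming from a Poincar\'e/eigenvalue estimate on the link graph of $p$. Monotonicity gives the two scale-invariant facts I need: the H\"older bound $\dist_Y(u(x),u(p))\le C\,r^{\alpha_p}$ and the energy decay $\int_{B_r(p)}|\nabla u|^2\le C\,r^{2\alpha_p}$ (with $\alpha_p<1$ possible, which is precisely where Lipschitz continuity fails). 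To pass to pointwise derivative control I would work on dyadic annuli $A_j=\{2^{-j-1}\le r\le 2^{-j}\}$ and rescale, setting $u_j(x)=u(2^{-j}x)$ on a fixed annulus $A_0$. Conformal invariance of the $2$-dimensional energy keeps $u_j$ harmonic, the H\"older bound keeps $u_j(A_0)$ in a single coordinate chart about $u(p)$, and the energy decay gives $\int_{A_0}|\nabla u_j|^2\le C\,2^{-2j\alpha_p}$. Since $A_0$ carries the smooth book structure of the previous paragraph, interior transmission Schauder estimates bound $\|u_j-u(p)\|_{C^k(A_0')}\le C\,2^{-j\alpha_p}$ on a slightly shrunk annulus. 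Undoing the scaling, an $a$-fold radial and $b$-fold angular derivative obeys $|\partial_r^a\partial_\theta^b u^m|\le C\,2^{-j(\alpha_p-a)}\approx C\,r^{\alpha_p-a}$ on $A_j$ --- each angular derivative costing one extra power of $r$ --- which reproduces the full table of bounds. Finiteness of the vertex set lets me take $\alpha=\min_p\alpha_p>0$, making $C$ and $\alpha$ uniform in $p$.

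\textbf{Main obstacle.} The hard part is the input of the previous paragraph: establishing monotonicity of the order function on the singular cone and identifying $\alpha_p$ as the homogeneity degree of a nonconstant homogeneous harmonic blow-up, together with the lower bound $\alpha_p>0$ via the spectral geometry of the link. This is the genuine technical heart, but it is exactly the content I would import from \cite{gromov-schoen} and \cite{DaskalMese08}; granting it, the edge regularity and the rescaling step are routine elliptic theory.
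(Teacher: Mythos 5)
Your proposal follows essentially the same route as the paper: smoothness up to an edge comes from the elliptic transmission system with the balancing (Neumann-jump) condition --- the paper cites \cite{DaskalMese08} for the qualitative statement and then invokes the Agmon--Douglis--Nirenberg complementing-boundary-condition framework together with a reflection trick --- and the vertex bounds come from the order-function decay of \cite{DaskalMese06} combined with Schauder estimates on balls $B_{R}(p)$ with $R$ comparable to the distance to the vertex, which is exactly your dyadic rescaling in different clothing. The only cosmetic differences are that the paper starts from the pointwise gradient bound $|\nabla u|^{2}\le Cr^{2\alpha-2}$ of Theorem 6.2 of \cite{DaskalMese06} rather than from the integrated energy decay, and it makes your link-eigenvalue lower bound explicit as $\alpha\ge 2\lambda_v^{comb}$ minimized over the finitely many vertices.
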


\begin{proof}
Let $\sigma=\sigma_1, \dots, \sigma_J$ be the 2-cells in ${\mathcal S}_2(\tau)$.  For each $j=1, \dots, J$, we let $(x,y)$ be the Euclidean coordinates of $\overline{\sigma_j \cup \tau}$ so that (i) $p$ is given as $(x,y)=(0,0)$, (ii) if $(x,y) \in \tau$ then $x>0$ and $y=0$ and (iii) if $(x,y) \in \sigma_j$ then $x,y>0$.   Let $u^m_j=u^m\big|_{\sigma_j}$.  The fact that $u^m_j \in C^{\infty}(\sigma_j \cup \tau)$ follows from Theorem 4 and Corollary 6 of \cite{DaskalMese08}.  

We will now compute the first derivative bounds with respect to the polar coordinates $r$ and $\theta$.  By Theorem 6.2  of \cite{DaskalMese06}, we have the inequality 
\[
|\nabla u|^2(r,\theta) \leq C r^{2\alpha-2}
\]
for some $\alpha>0$.  More specifically,  $\alpha$ can be chosen to be the order of $u$ at $p$, i.e. 
\[
\alpha = \lim_{r \rightarrow 0} \frac{r {\displaystyle  \int_{B_r(p)} } |\nabla u|^2 d\mu}{ {\displaystyle \int_{\partial B_r(p)} }d^2(u,u(p)) ds}.  
\]
Hence,
\begin{equation} \label{xy}
\left|\frac{\partial u^m_j}{\partial x} \right|  \leq Cr^{\alpha-1}\  \mbox{ and }\ \left|\frac{\partial u^m_j}{\partial y} \right| \leq Cr^{\alpha-1}.
\end{equation}
Using the fact that $x=r\cos \theta$ and $y=r\sin \theta$, we get
\[
\frac{\partial u^m_j}{\partial r}=\frac{\partial u^m_j}{\partial x} \cos \theta+\frac{\partial u^m_j}{\partial y} \sin \theta \ \mbox{ and } \
 \frac{\partial u^m_j}{\partial \theta}=-\frac{\partial u^m_j}{\partial x} r\sin \theta + \frac{\partial u^m_j}{\partial y} r \cos \theta.
\]
This  immediately implies
\[
\left|\frac{\partial u^m_j}{\partial r} \right| \leq Cr^{\alpha-1} \ \mbox{ and } \ \left|\frac{\partial u^m_j}{\partial \theta} \right| \leq C r^{\alpha}.
\]
We will now establish the second derivative estimates of $u^m_j$ for a points $(r,\theta)$ on $\sigma_j \cup \tau$ with $\theta$ sufficiently small.    We will need the following notations: for a function $\varphi$ and a domain $\Omega \subset \mathbb{R}^2$, we set
\[
|\varphi|_{0;\Omega} = \sup_{p \in \Omega} |\varphi(p)|
\]
\[
|D \varphi|_{0;\Omega}=\sup_{p \in \Omega} \max \left\{ \left|\frac{\partial \varphi}{\partial x}(p) \right|,  \left|\frac{\partial \varphi}{\partial y}(p)\right|  \right\}
\]
\[
|D^2 \varphi|_{0;\Omega}=\sup_{p \in \Omega} \max \left\{ \left|\frac{\partial^2 \varphi}{\partial x^2}(p) \right|,   \left|\frac{\partial^2 \varphi}{\partial x \partial y}(p)\right|, \left|\frac{\partial^2 \varphi}{\partial y^2}(p)\right|  \right\}
\]
\[
[\varphi]_{\beta;\Omega} =\sup_{p,q \in \Omega, p\neq q} \frac{|\varphi(p)-\varphi(q)|}{|p-q|^{\beta}} 
\]
\[
[D \varphi]_{\beta;\Omega}=\sup_{p,q \in \Omega, p\neq q} \frac{1}{|p-q|^{\beta}} \max \left\{ \left|\frac{\partial \varphi}{\partial x}(p)-\frac{\partial \varphi}{\partial x}(q) \right|,  \left|\frac{\partial \varphi}{\partial y}(p)-\frac{\partial \varphi}{\partial y}(q)\right|  \right\}
\]
\[
[D^2 \varphi]_{\beta;\Omega}=\sup_{p,q \in \Omega, p\neq q} \frac{1}{|p-q|^{\beta}} \max \left\{ \left|\frac{\partial^2 \varphi}{\partial x^2}(p)-\frac{\partial^2 \varphi}{\partial x^2}(q) \right|,  \left|\frac{\partial^2 \varphi}{\partial x \partial y}(p)-\frac{\partial^2 \varphi}{\partial x \partial y}(q) \right|,  \left|\frac{\partial^2 \varphi}{\partial y^2}(p)-\frac{\partial^2 \varphi}{\partial y^2}(q)\right|  \right\}.
\]
Let  $T :=\{(x,y) \in \mathbb{R}^2: y\geq0, y \leq \sqrt{3}x, y \leq -\sqrt{3}x +\sqrt{3}\}$, $T^-=\{(x,-y) \in \mathbb{R}^2:(x,y) \in T\}$ and $\hat{T}=T \cup T^-$. Fix $m$ and $j$ and define $U:\hat{T} \rightarrow \mathbb{R}$ by setting
\[
U(x,y) = \left\{ 
\begin{array}{ll}
u^m_j(x,y) & \mbox{if }  y\geq 0\\
-u^m_j(x,-y)+ {\displaystyle{ \frac{2}{J} \sum_{j'=1}^J}} u^m_{j'} (x,-y) & \mbox{if } y<0.
\end{array}
\right.
\]
Let 
\begin{equation}\label{eqn:Christoffel}
\Gamma^m_j=\sum_{p,q=1}^M\Gamma^m_{pq}(u_j)\left( \frac{\partial u^p_j}{\partial x} \frac{\partial u^q_j}{\partial x} + \frac{\partial u^p_j}{\partial y} \frac{\partial u^q_j}{\partial y}  \right)
\end{equation}
where $\Gamma^m_{pq}$ are the Christoffel symbols of $Y$ with respect to the local coordinates $(u^1, \dots, u^M)$. Since
the harmonic map equation
\[
\triangle u^m_j=\Gamma^m_j
\]
is satisfied in $T$, 
if we set
\[
f(x,y)=\left\{
\begin{array}{ll}
\Gamma^m_j(x,y) & \mbox{if } y \geq 0\\
-\Gamma^m_j(x,-y)+\frac{2}{J} {\displaystyle \sum_{j'=1}^J} \Gamma^m_{j'}(x,-y) & \mbox{if } y <0,
\end{array}
\right.
\]
then $U$ satisfies the Poisson equation
\[
\triangle U = f
\]
in $\hat{T}$.    If $B_{2R}(p) \subset \hat{T}$, then
elliptic regularity theory (cf.  \cite{GilbargTrudinger83} or Lemma 3 on p13 of \cite{Simon96}) implies
\[
R^{1+\beta} [DU]_{\beta;B_{\frac{3R}{2}}(p)} \leq  C(|U|_{0;B_{2R}(p)}+R^2|f|_{0;B_{2R}(p)}).
\]
If we choose $p$ to be a point on $\tau$ and $R$ to be the largest number so that $B_{2R}(p) \subset \hat{T}$, then $R$ is proportional to $r$ where $r$ is the distance of $p$ to the vertex $p$.  Furthermore, the distance from $p$ to any point of $B_{2R}(p)$ is bounded uniformly by some constant multiple of $r$.  Hence, assuming $U(0,0)=0$ without a loss of generality, we have
\[
[DU]_{\beta;B_{\frac{3R}{2}}(p)} \leq C(r^{-1-\beta}|U|_{0;B_{2R}(p)}+r^{1-\beta}|f|_{0;B_{2R}(p)}) \leq C(r^{-1-\beta+\alpha}+r^{-\beta+2\alpha-1}) \leq Cr^{-\beta+\alpha-1}.
\]
Here, we have used the H\"{o}lder continuity of $u^m_j$ (hence of $U$) near $p$ with H\"{o}lder exponent $\alpha$ (cf. Theorem 3.7 of \cite{DaskalMese06}) and the inequalities of (\ref{xy}) along with the fact that $f$ is quadratic in $Du^m_j$ from \eqref{eqn:Christoffel}.  
Thus, with $B_{\frac{3R}{2}}^+(p)=B_{\frac{3R}{2}}(p) \cap\{y \geq 0\}$, we obtain
\[
[Du^m_j]_{\beta;B_{\frac{3R}{2}}^+(p)} \leq  Cr^{-\beta+\alpha-1}.
\]
Since $u$ is Lipschitz, then this equation along with \eqref{xy} and \eqref{eqn:Christoffel} implies that
\begin{equation} \label{1beta}
[\Gamma^m_j]_{\beta;B_{\frac{3R}{2}}^+(p)} \leq  C|Du^k_j|_{0;B_{\frac{3R}{2}}^+(p)} [Du^\ell_j]_{\beta;B_{\frac{3R}{2}}^+(p)} \leq Cr^{-\beta+2\alpha-2}.
\end{equation}
We are now ready to prove the second derivative bounds of $u^m_j$.  Note that we have the set of  partial differential equations 
\begin{eqnarray}\label{joel1}
\triangle u^m_j=\Gamma^m_j &  j=1, \dots, J & m=1, \dots M
\end{eqnarray}
in $T$, along with boundary conditions
\begin{eqnarray}
u^m_j-u^m_1=0 &   j=2, \dots, J &  m=1, \dots, M \label{joel2} \\
\sum_{j=1}^J \frac{\partial u_j^m}{\partial y} =0 &   m=1, \dots, M \label{joel3}
\end{eqnarray}
in $B=\{ (x,y) \in \mathbb{R}^2: y=0, 0<x<1\}$.  
This  is a system of $JM$ number of equations containing $JM$ number of unknowns (i.e. $u^m_j$) along with $JM$ number of boundary conditions.  If we assign weights $s_j^m=0$ to the equations, weights $t_j^m=2$ to the unknowns, weights $r_j^m=-2$ for $j=2, \dots, M$ and $r^m_1=-1$ to the boundary conditions, then this system  is said to be elliptic with complementing boundary condition according  to the the elliptic regularity theory of \cite{AgmonDouglisNirenberg64} (or elliptic and coercive in \cite{KNS78}).   Hence, we have the Schauder estimates (cf. Theorem 9.1 of \cite{AgmonDouglisNirenberg64}), 
\begin{eqnarray*}
\lefteqn{ R^2 |D^2u^m_j|_{0;B^+_R(p) }+R^{2+\beta} [D^2u^m_j]_{\beta; B^+_R(p) }  }\\
& \leq & C(|\Gamma^m_j|_{0;B^+_{\frac{3R}{2}}(p)}+ R^{2+\beta} [\Gamma^m_j]_{\beta; B^+_{\frac{3R}{2}}(p)} + |u^m_j|_{0;B^+_{\frac{3R}{2}}(p)}).
\end{eqnarray*}
With the same choice of $p$ and $R$ as above, we obtain
\[
|D^2u^m_j|_{0;B^+_R(p) }
\leq C(|\Gamma^m_j|_{0;B^+_{\frac{3R}{2}}(p)}+ r^{\beta} [\Gamma^m_j]_{\beta; B^+_{\frac{3R}{2}}(p)} + r^{-2}|u^m_j|_{0;B^+_{\frac{3R}{2}}(p)}).
\]
The above inequality, along with (\ref{1beta}), implies
\[
|D^2u^m_j|_{0;B^+_R(p) }
\leq C(r^{2\alpha-2}+r^{2\alpha-2}+r^{\alpha-2}) \leq Cr^{\alpha-2}.
\]
Since
\[
\frac{\partial^2 u^m_j}{\partial r^2}=\frac{\partial^2 u^m_j}{\partial x^2} \cos^2 \theta + 2\frac{\partial^2 u^m_j}{\partial x \partial y} \sin \theta \cos \theta+\frac{\partial^2 u^m_j}{\partial y^2} \sin^2 \theta,
\]
\begin{eqnarray*}
\frac{\partial^2 u^m_j}{\partial r \partial \theta} & = & -\frac{\partial^2 u^m_j}{\partial x^2} r \sin \theta \cos \theta + \frac{\partial^2 u^m_j}{\partial x \partial y} r \cos^2 \theta - \frac{\partial u^m_j}{\partial x} \sin \theta \\
& & - \frac{\partial^2 u^m_j}{\partial x \partial y} r \sin^2 \theta + \frac{\partial^2 u^m_j}{\partial y^2} r \sin \theta \cos \theta + \frac{\partial u^m_j}{\partial y} \cos \theta,
\end{eqnarray*}
\begin{eqnarray*}
\frac{\partial^2 u^m_j}{\partial \theta^2} & = & \frac{\partial^2 u^m_j}{\partial x^2} r^2 \sin^2 \theta+ 2\frac{\partial^2 u^m_j}{\partial x \partial y} r^2  \sin^2 \theta + \frac{\partial^2 u^m_j}{\partial y^2} r^2 \cos \theta\\
& = & -\frac{\partial u^m_j}{\partial x}  r \cos \theta -\frac{\partial u^m_j}{\partial y}  r\sin \theta,
\end{eqnarray*}
we immediately obtain
\[
\left|\frac{\partial^2 u^m_j}{\partial r^2} \right|\leq C r^{\alpha-2}, \left|\frac{\partial^2 u^m_j}{\partial r \partial \theta} \right| \leq Cr^{\alpha-1} \mbox{ and } \left|\frac{\partial^2 u^m_j}{\partial \theta^2} \right| \leq Cr^{\alpha}
\]
at $(r,\theta)$ for $\theta$ sufficiently small.  This restriction on $\theta$  is due to  the choice of $R$ and $p$.    For $(r,\theta)$  with $\theta$ sufficiently large, we can use a similar argument using standard elliptic regularity theory (e.g. \cite{GilbargTrudinger83} or Lemma 3 on p13 of \cite{Simon96}) in the interior of $\sigma$.  
The third derivative estimates follow the same way from the first two by bootstrapping the elliptic equations (\ref{joel1}) with boundary conditions (\ref{joel2}) and (\ref{joel3}).

Section 4 of \cite{DaskalMese08} shows the that order of $u$ at a 0-cell $p$ can be bounded from below by 2$\lambda_v^{comb}$ where $\lambda_v^{comb}$ is the combinatorial eigenvalue of the link of $v$ which is always a positive quantity.  Hence choosing $\alpha$ to be the minimum of $2\lambda_v^{comb}$ over all 0-cells  of $X$, we have established the last assertion of the Theorem.
\end{proof}

\subsection{Weighted Sobolev spaces}\label{subsec:weightedsobolev}

In this subsection we recall the important features of the weighted Sobolev spaces used in this paper. The main references are \cite{Adams75}, \cite{ChoquetChristo81}, \cite{DaskalWentworth97} and \cite{Taubes87}.
In the following we fix a  smooth vector bundle $E$ of rank $r$ over a $2$-complex $X$ with a Hermitian metric, and a fixed Riemannian metric on the base space $X$. Define the space $C_0^\infty(E)$ to be the space of smooth sections $s \in \Omega^0(X, E)$ that satisfy $s(p) = 0$ whenever $p$ is a vertex of $X$. In the local model $\tilde{B}(r)$ around each vertex $p$, we define local co-ordinates $(t, \theta) = (- \log r, \theta)$, where $(r, \theta)$ are the standard polar co-ordinates in a neighbourhood of the vertex $p$. 
To define a norm on $C_0^\infty(E)$, choose open neighbourhoods $U_{x_i}$ for each vertex $x_i$, and cover the rest of $X$ with neighbourhoods $V_\alpha$ that do not intersect any of the vertices. For $\delta \in \R$, the space $L_\delta^p$ is the completion of $C_0^\infty(E)$ in the norm
\begin{equation}
\| s \|_{L_\delta^p} = \left( \sum_i  \int_{U_{x_i}} e^{t \delta} | s |^p  + \sum_\alpha \int_{V_\alpha} |s|^p \right)^{1/p}
\end{equation}
where we use $e^{t \delta}$ to denote the co-ordinates in a neighbourhood of a vertex. Away from all of the vertices, $e^{t \delta}$ is bounded and $s$ is continuous, and so the question of whether the norm $\| \cdot \|_{L_\delta^p}$ is finite only depends on the choice of co-ordinates near each vertex. Different choices of $V_\alpha$ will lead to equivalent norms.

Given a vertex $p$, we say that a connection of the form $\nabla = d $ is in \emph{temporal gauge} in a neighbourhood of $p$. Given a fixed connection $\nabla_0$ in temporal gauge, and a positive integer $k$, we define the \emph{weighted Sobolev space} $L_{k, \delta}^q(E)$ as the completion of $C_0^\infty(E)$ in the norm
\begin{equation}
\| s \|_{L_{k, \delta}^q} = \sum_{\ell = 0}^k  \| \nabla_0^\ell s \|_{L_\delta^q} 
\end{equation}
Note that in this paper we are considering bundles with a fixed trivialization on the universal cover (cf. Remark~\ref{rem:makessense}). Since the star of a vertex $p$ in $X$ is simply connected it follows that we have a fixed trivialization of $E$ in a neighbourhood of $p$. It thus makes sense to talk about connections on $E$ in temporal gauge.

It is a standard fact that the spaces $L_{k, \delta}^q$ do not change if we either: (a) change the connection $\nabla_0$ outside a neighbourhood of the vertices of $X$, or (b) change the co-ordinates outside a neighbourhood of the vertices.
Lemma 2.5 of \cite{ChoquetChristo81} states that the usual multiplication theorems for Sobolev spaces on compact manifolds carry over to the weighted Sobolev spaces studied here. To be more precise, we have that the multiplication map $L_{s_1, \delta_1}^2 \times L_{s_2, \delta_2}^2 \rightarrow L_{s, \delta}^2$ is continuous if $s_1, s_2 \geq s$, $s < s_1 + s_2 - n/2$ and $\delta < \delta_1 + \delta_2 + n/2$, where $n$ is the dimension of the complex $X$.

Following Section 3.1 of \cite{DaskalWentworth97} we define the space of weighted connections $\mathcal{A}_\delta^\C(E)$ to be the space of all connections whose connection form is an element of $L_{1, \delta}^2$, and the space $\mathcal{A}_\delta (E) \subset \mathcal{A}_\delta^\C(E)$ to be the subset of all unitary connections. The weighted gauge group $\mathcal{G}_\delta (E)$ is defined as follows. Let $\nabla_0$ be a connection in temporal gauge and define
\begin{equation}
\mathcal{R} = \left\{ v \in L_{2, loc}^2( \End(E)) \, : \, \| \nabla_0 v \|_{L_{1, \delta}^2} < \infty \right\}
\end{equation}
Then the \emph{weighted gauge group} is defined as
\begin{equation}
\mathcal{G}_\delta (E) = \left\{ v \in \mathcal{R} \, : \, v v^* = \id, \det v = 1 \right\}.
\end{equation}
and the complexified gauge group is
\begin{equation}\label{eqn:weighted-complex-gauge-group}
\mathcal{G}_\delta^{\C} (E)= \left\{ v \in \mathcal{R} \, : \, \det v = 1 \right\}
\end{equation}

The multiplication theorem for weighted Sobolev spaces (cf. Lemma 2.5 of \cite{ChoquetChristo81}) shows that both $\mathcal{G}_\delta (E)$ and $\mathcal{G}_\delta^\C (E)$ have a group structure, and that there are well-defined actions of $\mathcal{G}_\delta (E)$ on $\mathcal{A}_\delta$ and $\mathcal{G}_\delta^\C(E)$ on $\mathcal{A}_\delta^\C (E)$ respectively.

Similarly we have balanced versions of these spaces $\mathcal{G}_{bal, \delta}(E)$, $\mathcal{A}_{bal, \delta}(E)$ and $\Omega_{bal, \delta}^1(\ad(E))$. When a  smooth pair $(d_A, \psi) \in \mathcal{A}_{bal, \delta}(E) \times \Omega_{bal, \delta}^1(\ad(E))$ solves the equations \eqref{eqn:mu1'} and \eqref{eqn:mu2'}, then the \emph{holonomy of the pair} $(d_A, \psi)$ refers to the holonomy of the flat connection $d_A + \psi \in \mathcal{A}_{bal, flat, \delta}^\C(E)$.

\begin{proposition}\label{prop:exists-smooth}
If   $D_i \in \mathcal{A}_{bal, flat, \delta}^\C (E)$, $i=1,2$ are  smooth and $\mathcal{G}_{bal, \delta}^\C(E)$-gauge equivalent then  they are $\mathcal{G}_{bal}^\C(E)$-gauge equivalent. 
\end{proposition}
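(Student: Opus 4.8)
The plan is to show that the weighted gauge transformation relating $D_1$ and $D_2$ is in fact smooth, so that the two notions of equivalence agree on smooth flat connections. Let $g \in \mathcal{G}_{bal, \delta}^\C(E)$ satisfy $g(D_1) = D_2$. Writing $D_i = d + A_i$ locally, with $A_1, A_2$ smooth, the gauge relation \eqref{eqn:group-action} becomes the first-order system $dg = g A_2 - A_1 g$ on each $2$-cell. Equivalently, $g$ is a $\hat\nabla$-parallel section of $\End(E)$, where $\hat\nabla$ is the connection induced by $D_1$ on the target factor and $D_2$ on the source factor (so that $D_1 \circ g = g \circ D_2$); since $D_1$ and $D_2$ are flat, $\hat\nabla$ is a smooth flat connection on $\End(E)$. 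The entire argument then reduces to a regularity statement: a weak $\hat\nabla$-parallel section in the weighted space is smooth.

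First I would establish smoothness of $g$ away from the $0$-skeleton. On the interior of each $2$-cell, and up to each open $1$-cell (staying away from the vertices), the coefficients $A_1, A_2$ are smooth and $g \in L_{2, loc}^2$. The multiplication theorem (Lemma 2.5 of \cite{ChoquetChristo81}) together with the Sobolev embedding $L_2^2 \hookrightarrow C^{0,\beta}$ in dimension $2$ makes the right-hand side $g A_2 - A_1 g$ continuous, and bootstrapping $dg = g A_2 - A_1 g$ yields $g_\sigma \in C^\infty(\overline\sigma)$ near the open $1$-cells, for each $2$-cell $\sigma$. The face-compatibility $g_\sigma|_\tau = g_\tau$ holds because $g$ is a genuine continuous section of $\End(E)$. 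Note that this is consistent with the paper's notion of smoothness: the normal derivatives of $g$ across a $1$-cell need not agree, which is exactly the content allowed by the balancing condition.

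The main obstacle is the behaviour at the $0$-skeleton, where the weighted framework a priori permits $g$ to be singular; this is where flatness does the essential work. Fix a vertex $p$ and a point $x_0$ in the punctured star $\mathrm{st}(p) \setminus \{p\}$, where by the previous step $g$ is smooth and $\hat\nabla$-parallel. Because $\hat\nabla$ is flat and the star of $p$ is simply connected (as used in Section~\ref{subsec:weightedsobolev}), the $\hat\nabla$-holonomy around every loop in the star is trivial; arguing as in Lemma~\ref{lem:holonomyindep} applied to $\End(E)$, the section $g$ is the single-valued parallel transport $g(x) = P_{x_0 \to x}\, g(x_0)$. Since $D_1, D_2$ are smooth connections on all of $X$, the connection $\hat\nabla$ extends smoothly over $p$, so $P_{x_0 \to x}$ extends smoothly as $x \to p$ and $g$ extends smoothly to $p$ on each closed cell. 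Hence $g$ is a smooth section of $\End(E)$; as $\det g = 1$, Cramer's rule gives $g^{-1}$ smooth, so $g$ is a smooth automorphism of $E$.

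Finally, since $g \in \mathcal{G}_{bal, \delta}^\C(E)$ its lift to $\tilde X$ already satisfies the balancing condition \eqref{eqn:balancing} in the weighted sense; being now a smooth $1$-form, $d\tilde g_{ij} \in \Omega_{bal}^1(\tilde X)$, so $g \in \mathcal{G}_{bal}^\C(E)$ and $D_1, D_2$ are $\mathcal{G}_{bal}^\C(E)$-gauge equivalent. I expect the delicate point throughout to be precisely the extension across the $0$-skeleton: the whole difficulty is to rule out singular behaviour of $g$ at the vertices that the weighted spaces would otherwise tolerate, and this is controlled entirely by the flatness of $\hat\nabla$ and the simple connectivity of the star.
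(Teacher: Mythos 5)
Your proof is correct, but it is considerably more substantive than the paper's own argument, which disposes of the proposition in one line (``Since the result is local, it follows by elliptic regularity''). The overlap is your first step: bootstrapping the first-order relation $dg = gA_2 - A_1 g$ with smooth coefficients to get smoothness of $g$ on each closed $2$-cell away from the $0$-skeleton is exactly the local regularity the paper is appealing to. Where you genuinely add something is at the vertices: since $g$ a priori lives only in the weighted space $\mathcal{G}^{\C}_{bal,\delta}(E)$, which tolerates singular behaviour at the $0$-skeleton, local regularity on the punctured star does not by itself give a smooth extension across a vertex $p$. Your observation that $g$ is a parallel section for the smooth flat connection $\hat\nabla = D_1\otimes D_2^*$ on $\End(E)$, that the holonomy of $\hat\nabla$ around the link of $p$ is trivial because the loop bounds in the (simply connected) star where $\hat\nabla$ is smooth and flat (the argument of Lemma~\ref{lem:holonomyindep}), and that $g$ is therefore the single-valued parallel transport of $g(x_0)$ and hence extends smoothly over $p$, supplies precisely the step the paper leaves implicit; it is the analogue, for smooth connections, of what Proposition~\ref{prop: trivial-hol} does for merely $L^2_{1,\delta}$ ones. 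The remaining points you make --- face-compatibility of $g$ across $1$-cells, $\det g = 1$ and invertibility of the extension by continuity, and the balancing condition becoming a pointwise identity once $g$ is smooth --- are all correct and complete the argument. In short: same underlying mechanism as the paper for the interior regularity, plus an explicit flatness/parallel-transport argument at the vertices that the paper's one-line proof does not spell out; your version is the more convincing one.
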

\begin{proof} Since the result is local, it follows by elliptic regularity.
\end{proof}

\begin{proposition}\label{prop: trivial-hol}
Let $D \in  \mathcal{A}_{bal, flat, \delta}^\C (E)$ be  smooth. Then  $D$ has trivial holonomy around the vertices of $X$.
\end{proposition}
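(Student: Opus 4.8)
The plan is to fix a vertex $p$ and show that parallel transport of $D$ around a loop encircling $p$ tends to the identity as the loop shrinks to $p$, and then to use flatness to upgrade this to exact triviality. Since the closed star of $p$ is simply connected, $E$ is trivial in a neighbourhood of $p$, so I may work in a fixed trivialization in which $D = d + A$; because $D$ is smooth, the matrix entries of $A$ are bounded on each of the finitely many closed cells meeting $p$, say $\|A\| \le C$ near $p$.

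First I would set up, for each loop $\gamma$ in the link $L_p$ of $p$, the family of rescaled loops $\gamma_r$ sitting at distance $r$ from $p$ (the intersection of the cone structure of the star with the circle of radius $r$). Each $\gamma_r$ is piecewise $C^1$, with a corner at every crossing of a $1$-cell, and has total length $O(r)$ because within each $2$-cell the relevant arc subtends a fixed angle. Writing $H(\gamma_r)$ for the holonomy of $D$ around $\gamma_r$, the parallel transport ODE $\dot s + A(\dot\gamma_r)s = 0$ together with a Gronwall estimate yields
\[
\| H(\gamma_r) - \id \| \le \Big( \int_{\gamma_r} \|A\| \Big) \exp\Big( \int_{\gamma_r} \|A\| \Big) \le \big(C\,\mathrm{length}(\gamma_r)\big)\, e^{C\,\mathrm{length}(\gamma_r)} = O(r),
\]
so $H(\gamma_r) \to \id$ as $r \to 0$. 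If one only controls $A$ through the derivative bounds of Theorem~\ref{thm:harmonic-map-existence}, the same conclusion holds, since the tangential component $A(\partial_\theta)$ is then of order $r^\alpha$ with $\alpha > 0$, again forcing $\int_{\gamma_r}\|A\| \to 0$.

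Next I would argue that this limiting value is attained at every radius, not merely in the limit. Fix a basepoint $x_0$ at radius $r_0$ on a ray emanating from $p$, and for $r < r_0$ let $\beta_r = c_r\,\gamma_r\,c_r^{-1}$ be the based loop obtained by running in along the radial segment $c_r$, around $\gamma_r$, and back out. The radial homotopy shows that all the $\beta_r$ are homotopic inside the punctured star, where $D$ is flat, so Lemma~\ref{lem:holonomyindep} makes the holonomy $H(\beta_r)$ independent of $r$. On the other hand $H(\beta_r) = P(c_r)\,H(\gamma_r)\,P(c_r)^{-1}$, where $P(c_r)$ is the radial parallel transport; since $c_r$ has length at most $r_0$ with $\|A\| \le C$, the transports $P(c_r)$ and $P(c_r)^{-1}$ are uniformly bounded. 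Letting $r \to 0$ and using $H(\gamma_r) \to \id$ then forces the constant matrix $H(\beta_r)$ to equal $\id$. As $\gamma$ ranges over generators of $\pi_1(L_p)$, this gives trivial holonomy around $p$.

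I expect the main obstacle to be the interaction between the shrinking estimate and the basepoint: the loops $\gamma_r$ do not share a common basepoint, so the decay of $H(\gamma_r)$ must be transferred to a fixed loop by conjugation, and one must verify that the conjugating radial transports stay bounded (and do not themselves contribute holonomy) as $r \to 0$. This is precisely where smoothness is used twice — once to bound $A$ along the shrinking loops, once to bound $P(c_r)$ — while flatness, via Lemma~\ref{lem:holonomyindep}, is what makes ``holonomy around $p$'' a well-defined, $r$-independent quantity in the first place. The balancing and weighted Sobolev hypotheses play no essential role beyond placing $D$ in the ambient class; the argument uses only that $D$ is smooth and flat near $p$.
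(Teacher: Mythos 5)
There is a genuine gap, and it lies exactly where you declare the hypotheses irrelevant. Your main estimate rests on the claim that ``because $D$ is smooth, the matrix entries of $A$ are bounded near $p$.'' In this paper ``smooth'' for the connections in question means smooth on $X_* = X \setminus X^0$ only (cf.\ Proposition~\ref{prop:rightspace}, where $\mathcal D$, $d_A$, $\psi$ are asserted smooth ``over $X_*$''), and the connections the proposition must handle genuinely blow up at the vertices: by Theorem~\ref{thm:harmonic-map-existence} the radial component of $\psi = \tfrac12 u^{-1}du$ is only $O(r^{\alpha-1})$ with $\alpha$ possibly less than $1$. So $\|A\|\le C$ is not available, and your Gronwall bound $\int_{\gamma_r}\|A\| = O(r)$ fails. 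Your fallback, invoking the pointwise bound $|A(\partial_\theta)| = O(r^{\alpha})$ from Theorem~\ref{thm:harmonic-map-existence}, only applies to connections built from harmonic maps, whereas the proposition is stated (and used, e.g.\ in Definition~\ref{def:holonomy} and in the proof of Theorem~\ref{thm:BettiHiggs}) for an arbitrary smooth element of $\mathcal{A}_{bal, flat, \delta}^\C(E)$, for which no pointwise decay is given.

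Your closing remark that ``the weighted Sobolev hypotheses play no essential role'' is therefore exactly backwards: without the weight the statement is false. The connection $A = \lambda\, d\theta$ with $\lambda$ a constant traceless diagonal matrix is flat, smooth away from the vertex, and balanced, yet has holonomy $\exp(-2\pi\lambda) \neq \id$; it is excluded only because $\int_0^\infty e^{t\delta}|\lambda|^2\,dt = \infty$. The paper's proof uses precisely this: writing $A = B\,dt + C\,d\theta$ in the cylindrical coordinates $t = -\log r$, the finiteness of $\int_0^\infty e^{t\delta}\int_0^{2\pi}|C|^2\,d\theta\,dt$ produces a sequence $t_i \to \infty$ with $\int_0^{2\pi}|C(t_i,\theta)|\,d\theta \to 0$, and then the Gronwall/Hartman estimate gives $|s_{t_i}(2\pi)-\id|\to 0$; flatness (radius-independence of the holonomy) finishes the argument, as in your second paragraph. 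So the skeleton of your argument (shrink the loop, estimate the transport, use flatness to propagate the limit) matches the paper, but the input making $\int_{\gamma_r}\|A\|$ small must come from the weighted $L^2$ condition, not from boundedness of $A$.
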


\begin{proof}
 For $D=d+A$ write $A(t, \theta)=B(t, \theta)dt+C(t, \theta)d\theta$. Consider the family of loops  $c_t: [0,2\pi] \rightarrow X$ given by $c_t(\theta)=(t, \theta)$   and consider
 the holonomy equation from Definition~\ref{def:holonomy0} along  $c_t( \theta)$ 
\begin{equation}\label{eqn:holonomy0}
\frac{d s_t(\theta)}{d\theta} + C (t, \theta) s_t(\theta) = 0 \ \ \mbox{with} \ \ s_t(0)=id.
\end{equation}
 Lemma IV.4.1 on p54 of \cite{Hartman64} implies
 \begin{equation}\label{eqn:ode-estimate0}
| s_t(\theta) | \leq | s_t(0) |  \exp \left\{ \int_0^{\theta} | C(t, \theta) | \, d\theta \right\} \leq K \exp \left\{ \int_0^{2\pi} | C(t, \theta) | \, d\theta \right\}
\end{equation}
where $K$ is a dimensional constant.
Since
\[
\int_0^\infty e^{t \delta} \int_0^{2\pi}| C(t, \theta) |^2 d\theta dt < \infty
\]
there exists a sequence $t_i \rightarrow \infty$ such  $\int_0^{2\pi}| C(t_i, \theta) |^2 d\theta \rightarrow 0$. By Cauchy-Schwarz  we also have
\begin{equation}\label{eqn:L2estimate}
\int_0^{2\pi}|C(t_i, \theta) | d\theta \rightarrow 0.
\end{equation}
Combined with (\ref{eqn:ode-estimate0}) this implies  that $| s_{t_i}(\theta) |$ is uniformly bounded.
By integrating equation (\ref{eqn:holonomy0}) with respect to $\theta$, we obtain from (\ref{eqn:L2estimate})
\begin{eqnarray}
|s_{t_i}(2\pi)-s_{t_i}(0)| \leq \int_0^{2\pi}| s_{t_i}(\theta) || C(t_i, \theta) | d\theta \rightarrow 0.
\end{eqnarray}
Since the holonomy is independent of $t$ we obtain that $s_{t_i}(2\pi)=s_{t_i}(0)$ and thus it must be trivial.
\end{proof}

Proposition~\ref{prop:exists-smooth} and Proposition~\ref{prop: trivial-hol} allow us to define the notion of conjugacy class of holonomy for a smooth flat connection $D \in \mathcal{A}_{bal, flat, \delta}^{\C, irr} (E)$ as follows.
\begin{definition}\label{def:holonomy} 
Let $D \in \mathcal{A}_{bal, flat, \delta}^{\C, irr} (E)$  be a  smooth flat connection  and let $\rho_* : \pi_1(X_*) \rightarrow \SL(r, \C)$ be the  holonomy of $D$, where $X_*=X \backslash X^0$ and $X^0 $ denotes the 0-skeleton of $X$. Since the star of a vertex is contractible, then Van Kampen's theorem implies that $\pi_1(X) =\pi_1(X_*) \slash \pi$, where $\pi$ denotes the subgroup of $ \pi_1(X_*)$ generated by
 $\cup_{p \in X_0}  \pi_1(Lk(p))$. By Proposition~\ref{prop: trivial-hol}, the restriction of $\rho_*$ to $\pi$  is trivial hence it induces a homomorphism $\rho : \pi_1(X) \rightarrow \SL(r, \C)$. We say that the conjugacy class of holonomy of $D$ is $[\rho]$. Notice that the map is well defined since  gauge equivalent pairs yield conjugate holonomies. Furthermore, $\rho$ is irreducible because $D$ is irreducible.
\end{definition}

\section{Equivalence of moduli spaces}\label{sec:equivalence}

\subsection{Higgs Moduli Space}\label{subsec:dolbeaultspace}
We fix a vector bundle $E_c=E$ of rank $r$ over a $2$-complex $X$ with a Hermitian metric, and a fixed Riemannian metric on the base space $X$. 
\begin{definition}The \emph{ Higgs moduli space} is the space $\mathcal{M}_{Higgs}(E)$ of $\mathcal{G}_{bal, \delta }(E)$ equivalence classes of pairs $(d_A, \psi) \in \mathcal{A}_{bal, \delta}(E) \times \Omega_{bal, \delta}^1(\sqrt {-1}\ad (E))$ that are \emph{smooth, irreducible} and  solve the following equations
\begin{align}
F_A + \psi \wedge \psi & = 0 \label{eqn:mu1} \\
d_A \psi & = 0 \label{eqn:mu2} \\
d_A^* \psi & = 0 \label{eqn:mu3}.
\end{align}
We endow $\mathcal{M}_{Higgs}(E)$ with the $L^2_{1, \delta}$-topology.
\end{definition}

Given $[(d_A, \psi)] \in \mathcal{M}_{Higgs}(E)$, we can  assign by Definition~\ref{def:holonomy} the holonomy  $[\rho]$ of the flat connection $d_A+\psi$ and set  $\alpha[(d_A, \psi)] := [\rho]$. The map $\alpha$ is well defined. The next proposition follows from continuous dependence of solutions of ODE upon the initial condition.

\begin{proposition}\label{prop:BettiHiggs}
The  map $\alpha : \mathcal{M}_{Higgs}(E) \rightarrow \mathcal{M}_{char}^c $, where $\alpha[(d_A, \psi)]= [\rho]$, is well defined and continuous.
\end{proposition}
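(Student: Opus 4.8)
The plan is to verify the two asserted properties of $\alpha$ separately, reducing each to facts already established. The map $\alpha$ is defined by $\alpha[(d_A,\psi)] = [\rho]$, where $[\rho]$ is the conjugacy class of holonomy of the flat connection $d_A+\psi$ obtained from Definition~\ref{def:holonomy}. Recall that Proposition~\ref{prop:exists-smooth} and Proposition~\ref{prop: trivial-hol} guarantee that this holonomy descends to a genuine representation $\rho : \pi_1(X) \to \SL(r,\C)$, and irreducibility of $\rho$ follows from irreducibility of the pair. The target lands in $\mathcal{M}_{char}^c$ (rather than an arbitrary component) because the underlying bundle is fixed to be $E = E_c$, and by Lemma~\ref{lemma:components} the component of $\mathcal{M}_{char}$ is determined by the smooth isomorphism class of the associated flat bundle.

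First I would establish \emph{well-definedness}. Two representatives of the same class in $\mathcal{M}_{Higgs}(E)$ differ by a $\mathcal{G}_{bal,\delta}(E)$-gauge transformation. I must check that gauge-equivalent pairs yield conjugate holonomies. If $(d_{A_2},\psi_2) = g\cdot(d_{A_1},\psi_1)$ for $g \in \mathcal{G}_{bal,\delta}(E)$, then the associated flat connections $d_{A_1}+\psi_1$ and $d_{A_2}+\psi_2$ are related by the same complex gauge transformation $g$ (viewed in $\mathcal{G}_{bal,\delta}^\C(E)$), and parallel transport for the two connections is intertwined by $g$. Hence the holonomies around any loop are conjugate by the value of $g$ at the basepoint, so $[\rho]$ is unchanged. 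This is exactly the observation already recorded at the end of Definition~\ref{def:holonomy} (``gauge equivalent pairs yield conjugate holonomies''), so the verification is a direct unwinding of the parallel-transport equation in Definition~\ref{def:holonomy0}.

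Next I would prove \emph{continuity}, which is the substantive part. The $\mathcal{M}_{Higgs}(E)$ topology is the $L^2_{1,\delta}$-topology on the pairs, and $\mathcal{M}_{char}^c$ carries the topology of pointwise convergence of representations up to conjugacy. The strategy is to show that if $(d_{A_i},\psi_i) \to (d_A,\psi)$ in $L^2_{1,\delta}$, then the holonomy around each fixed loop $c$ converges. Fix a piecewise-smooth loop $c$ avoiding the $0$-skeleton (which is legitimate since, by Definition~\ref{def:holonomy}, the holonomy is computed on $X_* = X\setminus X^0$ and then shown to descend). Along $c$ the holonomy is the solution at the endpoint of the linear ODE $\tfrac{d}{dt}s + A_{c(t)}(c'(t))\,s = 0$ from Definition~\ref{def:holonomy0}, where the connection form is $A+\psi$ restricted to $c$. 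The hint in the statement is precisely that this reduces to \emph{continuous dependence of solutions of a linear ODE on the coefficients}: if the connection forms converge in an appropriate norm along $c$, then the time-ordered exponentials (holonomies) converge. The key technical point is that $L^2_{1,\delta}$-convergence of the connection forms on $X$ implies convergence of their restrictions to the fixed compact curve $c$ in a norm strong enough to apply the ODE comparison estimate (of the type invoked via Lemma IV.4.1 of \cite{Hartman64} in the proof of Proposition~\ref{prop: trivial-hol}); since $c$ stays a fixed positive distance from the vertices, the weight $e^{t\delta}$ is bounded there and $L^2_{1,\delta}$ restricts to an ordinary Sobolev norm which, in one dimension, embeds into $C^0$ along $c$.

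The \textbf{main obstacle} is the last point: controlling the restriction to $c$ uniformly and ensuring the Gr\"onwall-type estimate passes to the limit. The cleanest route is to choose a representative loop $c$ that meets each closed $2$-cell in a smooth arc bounded away from $X^0$, apply the trace/restriction theorem for $L^2_{1,\delta}$ over the cells crossed by $c$ to get convergence of $(A_i+\psi_i)|_c \to (A+\psi)|_c$ in $L^2$ (indeed in $C^0$, by the one-dimensional Sobolev embedding $L^2_1 \hookrightarrow C^0$), and then invoke continuous dependence of the linear holonomy ODE on its coefficient in $L^1$. Because $\pi_1(X)$ is finitely presented, it suffices to check this on a finite generating set of loops, after which convergence up to conjugation in $\mathcal{M}_{char}^c$ follows by normalizing the basepoint frame. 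I would also remark that the balanced condition plays no role in continuity itself; it enters only in guaranteeing, via Definition~\ref{def:holonomy}, that $[\rho]$ is a well-defined point of $\mathcal{M}_{char}^c$ rather than merely of the character variety of $\pi_1(X_*)$.
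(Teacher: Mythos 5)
Your proposal is correct and follows essentially the same route as the paper, which disposes of well-definedness by the remark at the end of Definition~\ref{def:holonomy} and of continuity by continuous dependence of solutions of the holonomy ODE (your version is more detailed, and rightly identifies the dependence as being on the coefficients, not merely the initial condition). One small caveat: the trace of an $L^2_{1,\delta}$ connection form onto a curve lying in a $2$-cell is only $H^{1/2}\subset L^2$ along the curve, not $L^2_1$, so your parenthetical upgrade to $C^0$ convergence via the one-dimensional Sobolev embedding is unjustified --- but this is harmless, since $L^1$ convergence of the coefficient along the loop already suffices for the Gr\"onwall-type comparison.
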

The following is the main theorem of this paper.
\begin{theorem}\label{thm:BettiHiggshomeomorhism}
The  map $\alpha : \mathcal{M}_{Higgs}(E) \rightarrow \mathcal{M}_{char}^c $ is a  homeomorphism.
\end{theorem}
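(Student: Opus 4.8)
The plan is to prove that $\alpha$ is a homeomorphism by constructing an explicit continuous inverse and then checking it is genuinely a two-sided inverse. Since Proposition~\ref{prop:BettiHiggs} already gives that $\alpha$ is well defined and continuous, the main work is to produce a continuous map $\beta : \mathcal{M}_{char}^c \to \mathcal{M}_{Higgs}(E)$ going the other way and to verify $\alpha \circ \beta = \id$ and $\beta \circ \alpha = \id$. The natural candidate for $\beta$ is the harmonic map construction: given an irreducible representation $\rho : \pi_1(X) \to \SL(r,\C)$, Theorem~\ref{thm:harmonic-map-existence} (together with the existence/uniqueness theorem preceding it) produces a unique $\rho$-equivariant harmonic map $u : \tilde X \to Y = \SL(r,\C)/\SU(r)$, and the construction of Section~\ref{subsec:relate-higgs}, in particular equations \eqref{eqn:fund-eqn}, \eqref{eqn:mu1'} and \eqref{eqn:mu2'}, assigns to $u$ a pair $(d_A, \psi)$ with $\psi = \tfrac12 u^{-1}du$ solving Hitchin's equations. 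The harmonicity of $u$ is precisely what yields the extra equation $d_A^* \psi = 0$ of \eqref{eqn:mu3}, so that the triple satisfies all three moduli-space equations. I would define $\beta[\rho] := [(d_A,\psi)]$.

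The first serious checkpoint is that $\beta$ lands in $\mathcal{M}_{Higgs}(E)$, which requires three verifications. First, the pair must satisfy the \emph{balancing condition}, i.e. $(d_A,\psi) \in \mathcal{A}_{bal,\delta}(E) \times \Omega^1_{bal,\delta}(\sqrt{-1}\,\ad(E))$: this is exactly where property (2) of the harmonic map from the introduction enters — the sum of normal derivatives of $u$ vanishes across each $1$-cell, which translates into the balancing of the connection and Higgs forms (compare the boundary condition \eqref{joel3} in the proof of Theorem~\ref{thm:harmonic-map-existence}). Second, the pair must have the stated regularity: here I would invoke the derivative bounds of Theorem~\ref{thm:harmonic-map-existence}, which control $u$ near the $0$-skeleton by powers $r^{\alpha}$ with $\alpha > 0$, and combine them with the definition of the weighted norms in Section~\ref{subsec:weightedsobolev} to show that $A$ and $\psi$ lie in $L^2_{1,\delta}$ for an appropriate choice of weight $\delta$ (this is the content alluded to in Proposition~\ref{prop:rightspace} of the introduction). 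Third, irreducibility of $(d_A,\psi)$ follows from irreducibility of $\rho$, since the holonomy of $d_A + \psi$ is conjugate to $\rho$.

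After establishing that $\beta$ is well defined, the identity $\alpha \circ \beta = \id$ is essentially definitional: the flat connection $d_A + \psi = \mathcal{D} = u^*d$ has holonomy $\rho$ by the $\rho$-equivariance of $u$ (item \eqref{item:the equations} of Section~\ref{subsec:relate-higgs}), and Definition~\ref{def:holonomy} together with Proposition~\ref{prop: trivial-hol} ensures this holonomy descends correctly to $\pi_1(X)$, so $\alpha(\beta[\rho]) = [\rho]$. For $\beta \circ \alpha = \id$, I would argue that a smooth irreducible balanced solution $(d_A,\psi)$ of \eqref{eqn:mu1}--\eqref{eqn:mu3} with holonomy $[\rho]$ must itself arise from the harmonic map associated to $\rho$: the integrability equations \eqref{eqn:mu1}--\eqref{eqn:mu2} make $d_A + \psi$ flat with holonomy $\rho$, while \eqref{eqn:mu3} forces the corresponding equivariant map to be harmonic, and uniqueness of the harmonic map (from the existence theorem) then pins down the gauge-equivalence class, giving $\beta(\alpha[(d_A,\psi)]) = [(d_A,\psi)]$. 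Finally, continuity of $\beta$ reduces to continuous dependence of the harmonic map $u$ on the representation $\rho$ in the $L^2_{1,\delta}$-topology.

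I expect the hardest step to be the continuity of $\beta$, i.e. showing that the harmonic map (and hence the associated Higgs pair) depends continuously on $\rho$ in the weighted Sobolev topology, uniformly up to the singular strata. This is delicate because the harmonic maps are only H\"older continuous and blow up in a controlled way near the $0$-skeleton (property (3) of the introduction), so one cannot appeal directly to smooth elliptic continuity; instead one must leverage the uniform derivative bounds of Theorem~\ref{thm:harmonic-map-existence}, crucially the fact that the exponent $\alpha$ can be chosen \emph{independently of the vertex} $p$, to obtain equicontinuity estimates in the weighted norms along a convergent sequence $\rho_i \to \rho$. The verification of the balancing condition for $\beta[\rho]$ is the second most technical point, since it requires translating the analytic balancing of normal derivatives of $u$ into the bundle-theoretic balancing condition of Definition~\ref{def:balancing} after passing to the flat trivialization on $\tilde X$.
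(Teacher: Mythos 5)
Your overall architecture is the same as the paper's: the inverse $\beta$ is built from the unique $\rho$-equivariant harmonic map via the construction of Section~\ref{subsec:relate-higgs}, well-definedness and membership in $\mathcal{M}_{Higgs}(E)$ come from Proposition~\ref{prop:rightspace} (regularity and balancing) together with Proposition~\ref{prop:injective} (complex gauge equivalence of solutions implies unitary gauge equivalence), $\alpha\circ\beta=\id$ is definitional, and continuity is proved by following the harmonic maps along a convergent sequence $\rho_i\to\rho$. Your continuity sketch is consistent with the paper's actual argument, which compares energies $E^{u_i}\le E^{\tilde u_i}=E^u$, extracts a uniformly convergent subsequence from the global H\"older bound, identifies the limit with $u$ by uniqueness, and then upgrades to strong $L^2_{1,\delta}$ convergence of the pairs using the uniform $L^2_{2,\delta}$ bounds coming from the third derivative estimates of Theorem~\ref{thm:harmonic-map-existence}.

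The one place you genuinely diverge is the step $\beta\circ\alpha=\id$, and there your route has a gap. You propose to show that any smooth irreducible balanced solution of \eqref{eqn:mu1}--\eqref{eqn:mu3} ``arises from the harmonic map associated to $\rho$,'' on the grounds that $d_A^*\psi=0$ forces the induced equivariant map to be harmonic. That is the converse (Corlette) direction, and in this singular setting it is not free: harmonicity on an admissible $2$-complex means local energy minimization, which requires not only the interior Euler--Lagrange equation on each $2$-cell (which is what $d_A^*\psi=0$ gives away from the skeleton) but also the matching of normal derivatives across $1$-cells and controlled behaviour at the $0$-cells; translating the balancing condition on the pair back into weak harmonicity of the map is a nontrivial converse that the paper never proves and deliberately avoids. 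Instead, the paper closes this step purely on the flat-connection side: applying $\alpha$ to both $[(d_A,\psi)]$ and a representative $(d_B,\phi)$ of $\beta(\alpha([(d_A,\psi)]))$ shows the two flat connections $d_A+\psi$ and $d_B+\phi$ have conjugate holonomy; Proposition~\ref{prop: trivial-hol} lets the holonomy descend to $\pi_1(X)$, Proposition~\ref{prop:holonomyequivalence} then gives complex gauge equivalence of the flat connections, and Proposition~\ref{prop:injective} upgrades this to $\mathcal{G}_{bal,\delta}$ equivalence of the pairs. If you want to keep your harmonic-map uniqueness argument, you would need to supply the missing equivalence between solutions of \eqref{eqn:mu1}--\eqref{eqn:mu3} with the balancing condition and equivariant harmonic maps; otherwise you should substitute the holonomy argument above.
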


In the next section we will construct the inverse map. We end this section with a  proposition that will be used later. 

\begin{proposition}\label{prop:injective}
Let $(d_{A_1}, \psi_1)$ and $(d_{A_2}, \psi_2)$ be solutions to equations (\ref{eqn:mu1})-(\ref{eqn:mu3}) and assume that they are $ \mathcal{G}^{\C}_{bal, \delta}(E)$-gauge equivalent. Then they are $ \mathcal{G}_{bal, \delta}(E)$-gauge equivalent.
 \end{proposition}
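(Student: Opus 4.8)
The statement asserts that two solutions of the Higgs equations \eqref{eqn:mu1}--\eqref{eqn:mu3} that are equivalent under the \emph{complex} balanced gauge group $\mathcal{G}^{\C}_{bal, \delta}(E)$ are in fact equivalent under the \emph{unitary} balanced gauge group $\mathcal{G}_{bal, \delta}(E)$. This is the standard ``uniqueness of the unitary gauge'' phenomenon familiar from the smooth Higgs bundle theory (Hitchin--Simpson), and the plan is to reduce it to a maximum-principle/vanishing argument for a single positive-definite self-adjoint endomorphism. The key point is that equation \eqref{eqn:mu3}, $d_A^* \psi = 0$, is precisely the condition that the map $u$ (equivalently the pair) be harmonic, so that the complex gauge orbit through a solution meets the unitary gauge orbit in at most one point.

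\textbf{Key steps.} First I would write the complex gauge transformation relating the two solutions as $g \in \mathcal{G}^{\C}_{bal, \delta}(E)$, so that $(d_{A_2}, \psi_2) = g \cdot (d_{A_1}, \psi_1)$. Using the polar-type decomposition of $\GL(r,\C)$, factor $g = k \cdot s$ where $k$ is unitary and $s = (g^* g)^{1/2}$ is positive-definite self-adjoint; the reduction then amounts to showing that the self-adjoint part $s$ must be covariantly constant, i.e.\ $s \in \mathcal{G}_{bal, \delta}(E)$ after absorbing $k$. Second, following the Donaldson--Simpson argument, I would derive a Bochner-type (subharmonicity) identity for the function $\tr(s) = |g|^2$ (or for $\log \det$ of the relevant endomorphism) using both the curvature equation \eqref{eqn:mu1} and the moment-map condition \eqref{eqn:mu3} for the two connections $d_{A_1}$ and $d_{A_2} = g(d_{A_1})$. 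The two copies of \eqref{eqn:mu3} combine to produce a differential inequality of the form $\Delta \tr(s) \geq 0$ (up to nonnegative curvature-type terms), exhibiting $\tr(s)$ as subharmonic away from the $0$-skeleton.

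\textbf{Main obstacle.} The genuinely new difficulty — as opposed to the classical compact-manifold case — is that the domain $X$ is a singular complex and the objects live in the \emph{weighted} Sobolev spaces $L^2_{1,\delta}$, so the maximum principle cannot be invoked naively. Here I would exploit the structure built up earlier: $\tr(s)$ is subharmonic on each $2$-cell away from the vertices, the balancing condition \eqref{eqn:balancing} controls the behaviour across the codimension-$1$ edges (the sum of normal derivatives vanishes, so no positive boundary contribution arises along the $1$-skeleton), and the weighted decay together with Proposition~\ref{prop: trivial-hol} controls the behaviour at the vertices. The delicate part is justifying that a bounded (or suitably $L^2_{1,\delta}$-controlled) subharmonic function on $X \setminus X^0$ that satisfies the balancing condition across edges and has the correct growth at the $0$-skeleton must be constant; this should follow by integrating the Bochner identity against a cutoff, using the balancing condition to discard the $1$-skeleton terms and the weighted estimates to discard the vertex terms, forcing $\nabla s = 0$. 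Once $s$ is covariantly constant it is determined by its value at a basepoint, and irreducibility of the holonomy forces $s$ to be a scalar; the determinant-one and trace conditions then pin it down, so $g$ is unitary up to the residual scalar, completing the reduction to a $\mathcal{G}_{bal, \delta}(E)$-equivalence.
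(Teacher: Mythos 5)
Your proposal follows essentially the same route as the paper: the paper sets $h = g^*g$, invokes Simpson's Lemma 3.1(d) to get the pointwise differential inequality on $\tr h$ away from the vertices, integrates over $X$ minus small balls around the $0$-cells, uses the balancing condition to cancel the $1$-skeleton boundary terms and the weighted bounds to kill the vertex contributions, concludes $\Delta \tr h = 0$ hence $Dh = 0$, and finishes by irreducibility forcing $h$ to be a constant scalar. Aside from cosmetic differences (working with $h = g^*g$ rather than its square root, and an opposite sign convention for the Laplacian), your plan matches the paper's argument step for step.
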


\begin{proof}
Assume that  there exists $g \in \mathcal{G}_{bal, \delta}^\C(E)$ such that $(d_{A_1}, \psi_1) = g \cdot (d_{A_2}, \psi_2)$, and we have to show that $g$ is unitary. 
Let $h=g^*g$ and we will show that $h$ is constant. By \cite{Simpson88} Lemma 3.1(d) we have the following point wise estimate away from the vertices (notice that the sign of our Laplacian is the opposite from Simpson's)
\begin{equation}\label{eqn:equality-iff}
 \quad \Delta \tr (h)  \leq 0.
\end{equation}
 Now since $g$ is balanced, then so is $\tr h$, and therefore an application of Stokes' theorem on each face of $X$ shows that
\begin{align}
\begin{split}
\int_X \Delta \tr h \, dx & = \lim_{r \rightarrow 0} \int_{X \setminus \bigcup_{\text{0-cells} \, v} B_r(v)} \Delta \tr h\, dx \\
& = \lim_{r \rightarrow 0} \sum_{\text{2-cells} \, \sigma} \int_{F \setminus \bigcup_{\text{0-cells} \, v} B_r(v)} \Delta \tr h\, dx \\
& = \lim_{r \rightarrow 0} \sum_{\text{2-cells} \, \sigma} \int_{\partial \left( F \setminus \bigcup_{\text{0-cells} \, v} B_r(v) \right)} \frac{\partial \tr h}{\partial \nu} \, ds
\end{split}
\end{align}
where $\nu$ is the outwards pointing normal vector on $\partial \left( \sigma \setminus \bigcup_{\text{0-cells} \, v} B_r(v) \right)$. The boundary $\partial \left( \sigma \setminus \bigcup_{\text{vertices} \, v} B_\sigma(v) \right)$ consists of points on the 1-cells of $\sigma$, and points on $\partial B_r(v) \cap \sigma$. Breaking the integral into these two parts, we obtain
\begin{multline}
\sum_{\text{2-cells} \, \sigma} \int_{\partial \left( \sigma \setminus \bigcup_{\text{0-cells} \, v} B_r(v) \right)} \frac{\partial \tr h}{\partial \nu} \, ds = \sum_{\text{2-cells} \, \sigma} \left( \sum_{\text{1-cells} \, \tau \, : \tau \cap \bar{\sigma} \neq \emptyset} \int_{\tau \setminus \bigcup_v B_r(v) \cap \tau} \frac{\partial \tr h}{\partial \nu} \, ds \right) \\ + \sum_{\text{2-cells} \, \sigma} \int_{\bigcup_v \partial B_r(v) \cap \sigma} \frac{\partial \tr h}{\partial \nu} \, ds
\end{multline}
The balancing condition shows that the first term is zero. Therefore we are left with
\begin{equation}\label{eqn:reduced-laplacian-eqn}
\int_X \Delta \tr h \, dx  = \lim_{r \rightarrow 0}  \sum_{\text{2-cells} \, \sigma} \int_{\bigcup_v \partial B_r(v) \cap \sigma} \frac{\partial \tr h}{\partial \nu} \, ds
\end{equation}
In polar co-ordinates, each component of this integral becomes
\begin{equation}
\int_{\partial B_r(v) \cap F} \frac{\partial \tr h}{\partial \nu} \, ds = r \int_0^{\frac{\pi}{3}} \frac{\partial \tr h}{\partial r} \, d \theta 
\end{equation}
Since $h \in \mathcal{G}(E)_{bal, \delta}^\C$ (and in particular, the integral of $\frac{\partial^2 h}{\partial r^2}$ is bounded), then we have
\begin{equation}
\lim_{r \rightarrow 0} \left( \sigma \int_0^{\frac{\pi}{3}}  \tr \left( \frac{\partial h}{\partial r} \right) \, d \theta  \right) = 0
\end{equation}
and so  \eqref{eqn:reduced-laplacian-eqn} becomes
\begin{equation}
\int_X \Delta \tr h \, dx = 0.
\end{equation}
Combined with $\Delta \tr h \leq 0$ from \eqref{eqn:equality-iff}, we see that $\Delta \tr h = 0$.  The second to the last formula in \cite[p876]{Simpson88} implies that $D(h)=0$ pointwise away from the vertices. This implies 
 that the connection $D$ splits according to the eigenspaces of $h$ , and since the connection $D$ is indecomposable, then $h$ must be a constant multiple of the identity matrix, which concludes the proof.
\end{proof}

\subsection{The inverse map}\label{subsec:the inverse map}

 For an irreducible representation $\rho : \pi_1(X) \rightarrow \SL(r, \C)$, with $[\rho] \in  \mathcal{M}_{char}^c$ and $E=E_c$, Theorem \ref{thm:harmonic-map-existence} then shows that there exists a unique $\rho$-equivariant harmonic map $u : \tilde{X} \rightarrow \SL(r, \C) / \SU(r)$.
As in subsection~\ref{subsec:relate-higgs}, let $d_A$ and $\psi$ be the associated unitary connection and Higgs field. Since $u$ is harmonic, $d_A$ is the pullback of the Levi-Civita connection on $\SL(r, \C) / \SU(r)$, and $\psi$ is the derivative of $u$, then we also have the equation
\begin{equation}
d_A^* \psi = 0 \label{eqn:momentmap3}
\end{equation}
almost everywhere (in fact by Theorem~\ref{thm:harmonic-map-existence} everywhere away from the 0-skeleton). 

\begin{proposition}\label{prop:rightspace} 
If  $u$ is harmonic,  $\alpha$ as in Theorem \ref{thm:harmonic-map-existence} and $\delta < \alpha$, then $\mathcal D \in \mathcal{A}_{bal, flat, \delta}^\C(E)$. The metric on the bundle $E$ induces a decomposition of $\mathcal D$ into skew-adjoint and self-adjoint parts, $\mathcal D = d_A + \psi$, where $d_A \in \mathcal{A}_{bal, \delta}(E)$ and $\psi \in \Omega^1_{bal, \delta}(i \ad(E))$. Furthermore, $\mathcal D$, $d_A $ and $ \psi$ are smooth (over $X_*$).
\end{proposition}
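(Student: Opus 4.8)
The plan is to read off each clause of the statement from the construction in Section~\ref{subsec:relate-higgs} together with the derivative estimates of Theorem~\ref{thm:harmonic-map-existence}, handling flatness, the decomposition, smoothness, the weighted estimate, and the balancing condition as separate points. Flatness is free: by construction $\mathcal{D}=u^{*}d$ is the pullback of the trivial connection $d$ under the $\rho$-equivariant map $u$, so it descends to a flat connection on $E_{\rho}=E$ and satisfies \eqref{eqn:mu1'}--\eqref{eqn:mu2'}; what remains is to place $\mathcal{D}$ in the weighted, balanced space. The decomposition $\mathcal{D}=d_{A}+\psi$ is precisely \eqref{eqn:fund-eqn}, where $d_{A}=u^{*}\nabla$ is the pullback of the Levi--Civita connection and $\psi=\tfrac12 u^{-1}du$. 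As in the Donaldson--Corlette construction of Section~\ref{subsec:relate-higgs}, $d_{A}$ is a metric connection and hence skew-adjoint, while $\psi$ is the self-adjoint remainder taking values in $i\,\ad(E)$; this gives $d_{A}\in\mathcal{A}(E,h)$ and $\psi\in\Omega^{1}(i\,\ad(E))$ once the regularity below is in place. Smoothness over $X_{*}=X\setminus X^{0}$ is then immediate, since Theorem~\ref{thm:harmonic-map-existence} gives $u\in C^{\infty}(\sigma\cup\tau)$ for every two-cell $\sigma$ and one-cell face $\tau$, and $\mathcal{D}$, $d_{A}$, $\psi$ are built algebraically from $u$ and its first derivatives.

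The analytic heart is to verify that the connection form of $d_{A}$ and the Higgs field $\psi$ lie in $L^{2}_{1,\delta}$, so that $\mathcal{D}\in\mathcal{A}^{\C}_{bal,flat,\delta}(E)$, $d_{A}\in\mathcal{A}_{bal,\delta}(E)$ and $\psi\in\Omega^{1}_{bal,\delta}(i\,\ad(E))$. Both the connection form $\tfrac12\ad(u^{-1}du)$ of $d_{A}$ and $\psi=\tfrac12 u^{-1}du$ are controlled pointwise by $|du|$, which by Theorem~\ref{thm:harmonic-map-existence} obeys $|du|\le Cr^{\alpha-1}$ near each vertex, while their first covariant derivatives are governed by the second-order bounds $|\partial^{2}u^{m}/\partial r^{2}|\le Cr^{\alpha-2}$ and its companions. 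I would pass to the co-ordinates $(t,\theta)=(-\log r,\theta)$, in which the weight is $e^{t\delta}$ and the temporal-gauge derivatives $\partial_{t},\partial_{\theta}$ are scale-invariant, so that $|\psi|$ and $|\nabla_{0}\psi|$ both decay like $e^{-\alpha t}$; the model weighted integral near a vertex then takes the form $\int_{T_{0}}^{\infty} e^{(\delta-2\alpha)t}\,dt$, which converges exactly when $\delta<2\alpha$. Since $\alpha>0$, the stated hypothesis $\delta<\alpha$ is a clean sufficient condition for the finiteness of all of these norms, including the first-derivative term. This is the step I expect to be the main obstacle, because it requires organizing the several derivative bounds of Theorem~\ref{thm:harmonic-map-existence} against the correct weighted measure and checking that $\nabla_{0}\psi$, and not merely $\psi$, is square-integrable against the weight.

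Finally, for the balancing condition I would combine Definition~\ref{def:balancing} with the boundary behaviour recorded in the proof of Theorem~\ref{thm:harmonic-map-existence}. The normal-derivative relation \eqref{joel3}, namely $\sum_{j=1}^{J}\partial u^{m}_{j}/\partial y=0$ along each one-cell $\tau$, says exactly that the sum over the two-cells incident to $\tau$ of the inward normal derivatives of $u$ vanishes. Since the components of the connection form of $d_{A}$ and of $\psi$ are linear in these first derivatives of $u$, the balancing identity \eqref{eqn:balancing}, $\sum_{\sigma>\tau}\omega_{\sigma}(e^{n}_{\sigma})=0$, holds for each of them on the pullback to $\tilde X$. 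Hence $\mathcal{D}$, $d_{A}$ and $\psi$ are balanced, which together with the preceding two paragraphs identifies the triple with elements of the weighted balanced spaces and completes the proof.
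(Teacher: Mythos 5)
Your proposal is correct and follows essentially the same route as the paper: flatness and the decomposition $\mathcal D = d_A+\psi$ are read off from the construction of Section~\ref{subsec:relate-higgs} and \eqref{eqn:fund-eqn}, the membership in the weighted spaces is reduced to $du\in L^2_{1,\delta}$ and verified in the coordinates $(t,\theta)=(-\log r,\theta)$ using the first- and second-derivative bounds of Theorem~\ref{thm:harmonic-map-existence}, and the balancing of $d_A$ and $\psi$ comes from the normal-derivative condition \eqref{joel3} since both are built linearly from $du$. The only cosmetic difference is that you spell out the balancing step (via \eqref{joel3}) in more detail than the paper does, which is a welcome clarification rather than a deviation.
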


\begin{proof}
The construction in Section \ref{subsec:relate-higgs} shows that the connection $\mathcal D$ is induced from the trivial connection on the universal cover, hence it is clearly balanced, flat  and $L_{1, \delta}^2$. Furthermore, since $d_A = u^*\nabla$ and $\psi=u^{-1}du$, Theorem~\ref{thm:harmonic-map-existence} and (\ref{eqn:fund-eqn}) imply that $d_A$ and $\psi$ are balanced. Therefore, since $u : X \rightarrow \SL(r, \C) / \SU(r)$ is a Lipschitz map over the compact space $X$, in order to show that $d_A \in \mathcal{A}_{bal, \delta}(E)$ and $\psi \in \Omega_{bal}^1(i \ad(E))_\delta$, it suffices to show that  to show  that $du \in L_{1, \delta}^2$.

First we show that $du \in L_\delta^2$. Theorem \ref{thm:harmonic-map-existence} shows that $\left| \frac{\partial u}{\partial r} \right| \leq C r^{\alpha - 1}$ and $\left| \frac{\partial u}{\partial \theta} \right| \leq C r^\alpha$ for some positive $\alpha$. Using the co-ordinate transformation $r = e^{-\tau}$ we see that $\left| \frac{\partial u}{\partial \theta} \right| \leq C e^{-\alpha \tau}$, and
\begin{align*}
\left| \frac{\partial u}{\partial \tau} \right| & = \left| \frac{\partial u}{\partial r} \frac{d r}{d \tau} \right| \\
 & \leq C r^{\alpha - 1} r \\
 & = C e^{-\alpha \tau}
\end{align*}

Therefore, $du \in L_\delta^2$ if $\delta < \alpha$. Similarly, we use the estimates on the second derivatives of $u$ to show that $du \in L_{1, \delta}^2$. We have $\left| \frac{\partial^2 u}{\partial \theta^2} \right| \leq C e^{-\alpha \tau}$, and we can compute
\begin{align*}
\left| \frac{\partial^2 u}{\partial \tau \partial \theta} \right| & = \left| \frac{\partial^2 u}{\partial r \partial \theta} \frac{d r}{d \tau} \right| \\
 & \leq C r^{\alpha - 1} r \\
 & = C e^{-\alpha \tau}
\end{align*}
and similarly
\begin{align*}
\left| \frac{\partial^2 u}{\partial \tau^2} \right| & = \left| \frac{\partial}{\partial \tau} \left( \frac{\partial u}{\partial r} \frac{d r}{d\tau} \right) \right| \\
 & = \left| \frac{\partial^2 u}{\partial r^2} \left( \frac{d r}{d \tau} \right)^2 + \frac{\partial u}{\partial \tau} \left( \frac{d}{dr} \frac{d r}{d \tau} \right) \frac{d r}{d \tau} \right| \\
 & \leq C r^{\alpha - 2} r^2 + C r^{\alpha - 1} r^2 \\
 & \leq C e^{-\alpha \tau} + C e^{-(\alpha+1)\tau} \\
 & \leq C e^{-\alpha \tau},
\end{align*}
where in the last step we use the fact that $\tau \geq 0$ near a vertex. Therefore, $du \in L_{1, \delta}^2$ if $\delta < \alpha$.
\end{proof}

\begin{theorem}\label{thm:BettiHiggs}
The  map $\beta :\mathcal{M}_{char}^c \rightarrow \mathcal{M}_{Higgs}(E)$ defined by $\beta([\rho])=[(d_A, \psi)]$ is a  continuous inverse of $\alpha$.
\end{theorem}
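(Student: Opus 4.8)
The plan is to establish four things in order: that $\beta$ genuinely takes values in $\mathcal{M}_{Higgs}(E)$ and is independent of the representative chosen in $[\rho]$; that $\alpha \circ \beta = \mathrm{id}$; that $\beta \circ \alpha = \mathrm{id}$; and that $\beta$ is continuous. Together with the continuity of $\alpha$ from Proposition~\ref{prop:BettiHiggs}, this simultaneously proves Theorem~\ref{thm:BettiHiggshomeomorhism}. For well-definedness, given $[\rho] \in \mathcal{M}_{char}^c$ the existence and uniqueness of the $\rho$-equivariant harmonic map $u$ together with the construction of subsection~\ref{subsec:relate-higgs} produce a pair $(d_A, \psi)$. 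Proposition~\ref{prop:rightspace} shows $(d_A,\psi) \in \mathcal{A}_{bal,\delta}(E) \times \Omega^1_{bal,\delta}(i\,\ad(E))$ and that it is smooth over $X_*$; equations \eqref{eqn:mu1} and \eqref{eqn:mu2} come from the flatness of $\mathcal D = d_A + \psi$ (that is, from \eqref{eqn:mu1'} and \eqref{eqn:mu2'}), while \eqref{eqn:mu3} is the harmonic map equation \eqref{eqn:momentmap3}, and irreducibility is inherited from $\rho$; hence $(d_A,\psi) \in \mathcal{M}_{Higgs}(E)$. To see the class is independent of the representative, note that replacing $\rho$ by $\xi\rho\xi^{-1}$ replaces $u$ by $\xi\cdot u$, which is again harmonic and equivariant and so is the unique harmonic map for $\xi\rho\xi^{-1}$; by the equivariance recorded in item (4) of subsection~\ref{subsec:relate-higgs} the new pair is carried to $(d_A,\psi)$ by the induced balanced gauge transformation, so $\beta$ descends to $\mathcal{M}_{char}^c$.

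The identity $\alpha \circ \beta = \mathrm{id}$ is immediate: by item (5) of subsection~\ref{subsec:relate-higgs} the flat connection $\mathcal D = d_A + \psi$ attached to $\beta([\rho])$ has holonomy $\rho$, so $\alpha(\beta([\rho])) = [\rho]$. For $\beta \circ \alpha = \mathrm{id}$, let $(d_A,\psi) \in \mathcal{M}_{Higgs}(E)$ and set $[\rho] = \alpha([(d_A,\psi)])$, the holonomy of $\mathcal D = d_A + \psi$. The crucial observation is that the equation $d_A^* \psi = 0$ together with flatness says precisely that the Hermitian metric underlying the skew-adjoint/self-adjoint decomposition $\mathcal D = d_A + \psi$, read off in the flat trivialization of $\mathcal D$ on $\tilde X$, is a $\rho$-equivariant \emph{harmonic} map $\tilde X \to \SL(r,\C)/\SU(r)$. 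By uniqueness of the equivariant harmonic map this agrees, up to the conjugation relating the chosen holonomy representatives, with the map $u$ defining $\beta([\rho])$; reconstructing the pair from this map via subsection~\ref{subsec:relate-higgs} returns $(d_A,\psi)$ up to a unitary frame change, so $\beta([\rho]) = [(d_A,\psi)]$. Propositions~\ref{prop:holonomyequivalence}, \ref{prop:exists-smooth} and~\ref{prop:injective} supply the complementary bookkeeping, namely that flat connections with the same holonomy are complex gauge equivalent and that complex gauge equivalent solutions are in fact $\mathcal{G}_{bal,\delta}(E)$-gauge equivalent, which is what guarantees the resulting unitary class is unambiguous.

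Finally I would prove continuity of $\beta$, which I expect to be the main obstacle. Suppose $\rho_n \to \rho$ in $\mathcal{M}_{char}^c$; I must show the harmonic pairs converge in the $L^2_{1,\delta}$-topology, equivalently that the equivariant harmonic maps $u_n$ converge to $u$ together with their first derivatives in the weighted sense. The strategy is a compactness-plus-uniqueness argument: along a convergent sequence of representations the total energies remain bounded, so the a priori bounds of Theorem~\ref{thm:harmonic-map-existence} hold with constants $C$ and order $\alpha$ that may be chosen uniform in $n$ (recall $\alpha$ is bounded below, uniformly over the vertices, by $2\lambda_v^{comb}$). These uniform derivative bounds give equicontinuity and precompactness; any subsequential limit is a $\rho$-equivariant harmonic map and hence equals $u$ by uniqueness, so the full sequence converges to $u$. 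Feeding the uniform pointwise derivative estimates through the change of variables $r = e^{-\tau}$ exactly as in the proof of Proposition~\ref{prop:rightspace} then upgrades this to $d_{A_n} \to d_A$ and $\psi_n \to \psi$ in $L^2_{1,\delta}$, which is the required continuity. The delicate point throughout is the uniformity in the representation of the estimates of Theorem~\ref{thm:harmonic-map-existence}, and this is where the bulk of the analytic work lies.
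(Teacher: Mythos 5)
Your overall architecture (well-definedness, $\alpha\circ\beta=\id$, $\beta\circ\alpha=\id$, continuity, with Theorem~\ref{thm:BettiHiggshomeomorhism} following from Proposition~\ref{prop:BettiHiggs}) matches the paper, and the well-definedness and $\alpha\circ\beta=\id$ steps are essentially identical to the paper's. The one place you genuinely diverge is $\beta\circ\alpha=\id$, and there your ``crucial observation'' is the weak point. You assert that $d_A^*\psi=0$ together with flatness says \emph{precisely} that the associated equivariant map $\tilde X\to \SL(r,\C)/\SU(r)$ is harmonic, and then invoke uniqueness of the equivariant harmonic map. That is the classical Corlette direction, but on a singular domain it is not a tautology: harmonicity in the Chen/Eells--Fuglede sense means energy minimization (or criticality) in $W^{1,2}$, and passing from the pointwise Euler--Lagrange equation away from the skeleton, plus the balancing condition on $1$-cells, to genuine harmonicity of the map requires controlling behavior at the $0$-skeleton; the paper never proves this implication and deliberately routes around it. Its argument is purely holonomy-theoretic: apply $\alpha$ to both sides to see the two pairs have conjugate holonomies, use Proposition~\ref{prop: trivial-hol} so the holonomy descends to $\pi_1(X)$, conclude complex gauge equivalence from Proposition~\ref{prop:holonomyequivalence}, and upgrade to $\mathcal{G}_{bal,\delta}$-equivalence by Proposition~\ref{prop:injective}. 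You cite exactly these propositions as ``complementary bookkeeping,'' but they are not complementary --- they are the entire proof, and your foregrounded harmonicity claim is the part that would need a new (and nontrivial) argument. Either drop the harmonicity claim and promote the holonomy argument to the main line, or supply a proof of the singular Corlette implication.

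Your continuity argument is the same compactness-plus-uniqueness strategy as the paper's, with two details left implicit that you should supply: the uniform energy bound comes from the explicit comparison maps $\tilde u_i$ equal to $u$ on a fundamental domain and extended $\rho_i$-equivariantly (giving $E^{u_i}\le E^{\tilde u_i}=E^{u}$), and the passage from uniform pointwise derivative bounds to convergence of $(d_{A_i},\psi_i)$ in $L^2_{1,\delta}$ is carried out in the paper by bounding the $L^2_{2,\delta}$-norms uniformly (this is where the third-derivative estimates of Theorem~\ref{thm:harmonic-map-existence} enter) and then using weak compactness together with the compact embedding $L^2_{2,\delta}\hookrightarrow L^2_{1,\delta}$, rather than by dominated convergence of the pointwise estimates alone.
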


\begin{proof}
The first step is to show  that the map $\beta$ is well defined. Given $\rho$, \ Proposition~\ref{prop:rightspace} implies that $d_A \in \mathcal{A}_{bal, \delta}(E)$ and $\psi \in \Omega_{bal}^1(i \ad(E))_\delta$. Moreover, we claim that the pair $(d_A, \psi)$ is irreducible. If  $\rho_*: \pi_1(X_*) \rightarrow \SL(r,\C)$ denotes the holonomy of the flat connection $d_A + \psi$ then,  as pointed out in Definition~\ref{def:holonomy},  $\rho_*=\rho \circ p $, where $p: \pi_1(X_*) \rightarrow \pi_1(X) =\pi_1(X_*) \slash \pi$ is the natural quotient map. Since by assumption  $\rho$ is irreducible it follows that $\rho_*$ is also irreducible proving our claim.

Now, let $\rho$ and $\rho'=\gamma \rho \gamma^{-1}$ be two representatives of $[\rho]$ and  let $u$ and $u'$ be the two corresponding equivariant harmonic maps. It follows that $u'=\gamma \cdot u$ where $\cdot$ denotes the action of $\SL(r, \C)$ on $\SL(r, \C) / \SU(r) $. It follows that the induced decompositions $\mathcal D= d_A + \psi$ on the universal cover
 agree, hence after taking the quotients by $\rho$ and $\rho'=\gamma \rho \gamma^{-1}$ respectively the corresponding pairs are complex gauge equivalent by $ \gamma$. Proposition~\ref{prop:injective} then shows that they are $\mathcal{G}_{bal, \delta}$ gauge equivalent which completes the proof that 
$\beta$ is well defined.

Next we will  first show that $\alpha(\beta([\rho]))=[\rho]$. Let $\beta([\rho]))=[(d_A, \psi)]$. According to \eqref{eqn:fund-eqn}, we have  $d_A+\psi=\mathcal D$, where $\mathcal D$ is the connection on $ad(E_\rho)$ induced by the trivial connection on the universal cover which has holonomy $\rho$. Hence,  $\alpha(\beta([\rho]))=[\rho]$.

Conversely, $\beta(\alpha([(d_A, \psi)]))=[(d_A, \psi)]$. Indeed, let $(d_B, \phi)$ be a smooth representative of $\beta(\alpha([(d_A, \psi)]))$. By applying $\alpha$ on both sides and what we just proved,
$ \alpha([(d_A, \psi)])=\alpha([(d_B, \phi)])$. In other words, $(d_A, \psi)$ and $(d_B, \phi)$ have conjugate holonomies. Since the holonomies of these pairs near the vertices are trivial by  Proposition~\ref{prop: trivial-hol},  then Proposition~\ref{prop:holonomyequivalence} implies that the corresponding flat connections (and hence also the pairs) are complex gauge equivalent. Thus Proposition~\ref{prop:injective} implies that $(d_A, \psi)$ and $(d_B, \phi)$ are $\mathcal{G}_{bal, \delta}$ gauge equivalent, hence $\beta(\alpha([(d_A, \psi)]))=[(d_A, \psi)]$.

In order to prove continuity, let $\rho_i \rightarrow \rho \in \mathcal{M}_{char}^c$ and let $u_i$, $u$ the associated equivarient harmonic maps. Fix a compact fundamental domain $F \subset \tilde X$  for the action of $\Gamma$ and define $\rho_i $ equivarient maps 
${\tilde u}_i$ by setting ${\tilde u}_i=u$ on $F$ and extending $\rho_i $ equivalently on $\tilde X$. Since $u_i$ are harmonic, we have
\[
E^{u_i} \leq E^{\tilde{u}_i}=E^u.
\]
The global H\"older bound (cf. Theorem 3.12 of \cite{DaskalMese06}) implies that there is a subsequence (we call it again by $\{i \}$ by a slight abuse of notation) such that $u_i \rightarrow u_\infty$  uniformly on $F$. Furthermore, the convergence of the representations $\rho_i \rightarrow \rho $ implies that $u_\infty$ is $\rho$-equivariant and Theorem 5.1 of \cite{DaskalMese06} implies that $u_\infty$ is harmonic. Finally, the  uniqueness Theorem 4.6 of \cite{DaskalMese06} implies that $u_\infty=u$. We have thus shown so far
\[
u_i \rightarrow u \ \ \mbox{locally uniformly}.
\]
Let $(d_{A_i}, \psi_i)$ denote the unitary connection and Higgs field associated with the harmonic map $u_i$.  By Theorem~\ref{thm:harmonic-map-existence} together with the proof of Proposition~\ref{prop:rightspace} (in this we use the third derivative estimates) we obtain that the  $L^2_{2, \delta}$-norm of $({A_i}, \psi_i)$ is uniformly bounded, and thus there exists a subsequence (we call it again by $\{i \}$ by a slight abuse of notation) such that $(d_{A_i}, \psi_i)
\rightarrow (d_A, \psi)$ weakly in $L^2_{2, \delta}$ and hence strongly in $L^2_{1, \delta}$.
\end{proof}

\def\cprime{$'$}

\end{document}